\DeclareFontFamily{OT1}{pzc}{}
\DeclareFontShape{OT1}{pzc}{m}{it}{<-> s * [1.200] pzcmi7t}{}
\DeclareMathAlphabet{\mathpzc}{OT1}{pzc}{m}{it}
\numberwithin{equation}{section}
\newcommand{\R}{\mathbb{R}}
\newcommand{\N}{\mathbb{N}}
\renewcommand{\H}{\mathfrak{H}}
\newcommand{\C}{\mathbb{C}}
\newcommand{\B}{\mathcal{B}}
\newcommand{\Os}{\mathcal{O}}
\newcommand{\F}{\mathfrak{F}}
\renewcommand{\a}{\mathpzc{a}}
\renewcommand{\b}{\mathpzc{b}}
\newcommand{\wt}[1]{\widetilde{#1}}
\newcommand{\dom}{\operatorname{dom}}
\renewcommand{\d}[2]{\frac{#1}{#1 #2}}
\theoremstyle{plain}
\newtheorem{theorem}{Theorem}[section]
\newtheorem{corollary}[theorem]{Corollary}
\newtheorem{proposition}[theorem]{Proposition}
\theoremstyle{definition}
\newtheorem{definition}[theorem]{Definition}
\theoremstyle{remark}
\newtheorem{remark}{Remark}
\title[Segal-Bargmann Transforms Associated to a Family of Coupled Supersymmetries]{Segal-Bargmann Transforms Associated to \\ a Family of Coupled Supersymmetries}
\author{Cameron L. Williams}
\address{
Department of Mathematics \\
Embry-Riddle Aeronautical University \\
3700 Willow Creek Road \\
Prescott, AZ 86301
}
\begin{document}

\begin{abstract}
The Segal-Bargmann transform is a Lie algebra and Hilbert space isomorphism between real and complex representations of the oscillator algebra. The Segal-Bargmann transform is useful in time-frequency analysis as it is closely related to the short-time Fourier transform. The Segal-Bargmann space provides a useful example of a reproducing kernel Hilbert space. Coupled supersymmetries (coupled SUSYs) are generalizations of the quantum harmonic oscillator that have a built-in supersymmetric nature and enjoy similar properties to the quantum harmonic oscillator. In this paper, we will develop Segal-Bargmann transforms for a specific class of coupled SUSYs which includes the quantum harmonic oscillator as a special case. We will show that the associated Segal-Bargmann spaces are distinct from the usual Segal-Bargmann space: their associated weight functions are no longer Gaussian and are spanned by stricter subsets of the holomorphic polynomials. The coupled SUSY Segal-Bargmann spaces provide new examples of reproducing kernel Hilbert spaces.
\end{abstract}

\maketitle

\section{Introduction}

The harmonic oscillator is one of the most fundamental systems in quantum mechanics and quantum field theory due in part to its Lie algebraic structure and the fact that many potentials near their local minima may be well-approximated by a quadratic potential. The Lie algebra associated to the quantum harmonic oscillator is comprised of the Hamiltonian and creation and annihilation operators which have the interpretations of creating or destroying quanta of the quantum field. For our purposes, we define the quantum harmonic oscillator Hamiltonian defined on sufficiently nice functions in $L^2(\R,\mathrm{d}x)$ to be the operator $\mathcal{H}_{\text{HO}}f(x) = -\frac{1}{2}\frac{\mathrm{d}^2}{\mathrm{d}x^2}f(x) + \frac{1}{2}x^2f(x)$. The quantum harmonic oscillator Hamiltonian can be formally factored as $\mathcal{H}_{\text{HO}} = \frac{1}{2}\big(-\d{\mathrm{d}}{x}+x\big)\big(\d{\mathrm{d}}{x}+x\big)f(x) + \frac{1}{2}f(x)$. A standard exercise in quantum mechanics shows that the Hermite-Gauss functions, $h_l$, given by $h_l(x) = \frac{(-1)^l}{\sqrt{2^l l!\sqrt{\pi}}} e^{\frac{x^2}{2}} \frac{\mathrm{d}^l}{\mathrm{d}x^l} e^{-x^2}$, are the orthonormal $L^2(\R,\mathrm{d}x)$ eigenfunctions of the quantum harmonic oscillator Hamiltonian.

For $f\in L^2(\R,\mathrm{d}x)\cap AC(\R\,\mathrm{d}x)$ such that $xf\in L^2(\R,\mathrm{d}x)$, and $f' \in L^2(\R,\mathrm{d}x)$, define the operator $a$ by $af(x) = \frac{1}{\sqrt{2}}\big(f'(x)+xf(x)\big)$. Restricted to a similar set of functions, the adjoint $a^*$ has the form $a^*f(x) = \frac{1}{\sqrt{2}}\big(-f'(x)+xf(x)\big)$ so that $\mathcal{H}_{\text{HO}} = a^*a + \frac{1}{2}$ on sufficiently nice functions. The operators $a$, $a^*$, and $a^*a$ satisfy the following commutation relations
\begin{equation} \label{eq:oscillator_algebra}
[a,a^*] = 1, \qquad [a^*a,a] = -a, \qquad [a^*a,a^*] = a^*.
\end{equation}

A simple inspection shows that the $\operatorname{span}\{1,a,a^*, a^*a\}$ is closed under commutators and thus forms the basis of a Lie algebra. This Lie algebra is sometimes referred to as the oscillator algebra \cite{streater}. It has as a Lie subalgebra the Heisenberg-Weyl algebra which is generated by the commutation relation $[a,a^*] = 1$---also called the canonical commutation relation---in \eqref{eq:oscillator_algebra}.

The quantum harmonic oscillator Hamiltonian can be abstractly viewed as a Hamiltonian of the form $a^*a$ where $a$, $a^*$, and $a^*a$ satisfy the above commutation relations. Therefore one may view $\mathcal{H}_{\text{HO}}$ and its raising and lowering operators as a concrete representation of the abstract quantum harmonic oscillator given by \eqref{eq:oscillator_algebra} on $L^2(\R,\mathrm{d}x)$. Another representation is in terms of number states: given a separable Hilbert space $\H$ and an orthonormal basis $(e_n)_{n=0}^{\infty}$, define $a e_n = \sqrt{n} e_n$ (and extend $a$ linearly), then $a^* e_n = \sqrt{n+1}e_n$ and $a^*a e_n = n e_n$. Simple calculations show that these operators also satisfy the commutation relations in \eqref{eq:oscillator_algebra} and thus form another representation of the quantum harmonic oscillator. The number states representation also makes it easy to discuss the proper domains of definition of $a$ and $a^*$.

Yet another representation exists on the Segal-Bargmann space \cite{bargmann1, segal, bargmann2}, denoted $\F$---a Hilbert space of entire functions on $\C$ with measure $\mathrm{d}\rho(z,\bar{z}) = \frac{1}{\pi} e^{-z\bar{z}}\,\mathrm{d}A(z)$ where $\mathrm{d}A(z)$ is the usual Lebesgue measure on $\C$ given by $\mathrm{d}x\wedge \mathrm{d}y$ or $-\frac{1}{4i} \mathrm{d}z\wedge \mathrm{d}\bar{z}$. The functions $(e_n)_{n=0}^{\infty}$ given by $e_n(z) = \frac{1}{\sqrt{n!}} z^n$ form an orthogonal basis for the Segal-Bargmann space. On the Segal-Bargmann space, the complex derivative operator $\a f(z) = \d{\partial}{z}f(z)$ has adjoint $\a^*f(z) = zf(z)$. It is easy to see that the operators $\a$, $\a^*$, and $\a^*\a$ on the Segal-Bargmann space also satisfy the conditions of the oscillator algebra in \eqref{eq:oscillator_algebra}, allowing for the application of complex analytic methods to the study of the quantum harmonic oscillator.

The Segal-Bargmann transform is a Lie algebra and Hilbert space isomorphism between the $L^2(\R,\mathrm{d}x)$ representation of the oscillator algebra and the $\F$ representation of the oscillator algebra. The Segal-Bargmann transform, denoted $\mathcal{B}$, is a unitary map from $L^2(\R,\mathrm{d}x)$ to $\F$ given by
\begin{equation}
\B f(z) = \frac{1}{\pi^{\frac{1}{4}}} \int_{\R} e^{-\frac{z^2}{2}}e^{\sqrt{2}zx}e^{-\frac{x^2}{2}} f(x)\,\mathrm{d}x
\end{equation}

\noindent with inverse given by
\begin{equation}
\B^{-1} f(x) = \frac{1}{\pi^{\frac{1}{4}}} \int_{\C} e^{-\frac{\bar{z}^2}{2} + \sqrt{2} \bar{z}x - \frac{x^2}{2}} f(z)\,\mathrm{d}\rho(z,\bar{z})
\end{equation}

\noindent The Hilbert space and Lie algebra isomorphism properties that define the Segal-Bargmann transform can be summarized (for sufficiently nice $f\in L^2(\R,\mathrm{d}x)$) by:
\begin{align}
\frac{1}{\pi}\int_{\C} |\mathcal{B}f(z)|^2 e^{-z\bar{z}}\,\mathrm{d}A(z) &= \int_{-\infty}^{\infty} |f(x)|^2\,\mathrm{d}x \\
\mathcal{B}(af)(z) &= \a \mathcal{B}f(z) \\
\mathcal{B}(a^* f)(z) &= \a^*\mathcal{B}f(z) \\
\mathcal{B}(a^*a f)(z) &= \a^*\a\mathcal{B}f(z)
\end{align}

\noindent The isomorphisms can be summarized via the following commutative diagrams.

\begin{center}
 \begin{figure}[!ht]
  \begin{tikzpicture}
   \node (a) at (0,0) {$L^2(\R,\mathrm{d}x)$};
   \node (b) at (0,2) {$\F$};
   \node (c) at (4,0) {$L^2(\R,\mathrm{d}x)$};
   \node (d) at (4,2) {$\F$};
   
   \node (e) at (8,0) {$L^2(\R,\mathrm{d}x)$};
   \node (f) at (8,2) {$\F$};
   \node (g) at (12,0) {$L^2(\R,\mathrm{d}x)$};
   \node (h) at (12,2) {$\F$};
   
   \draw[->] (a) to node [left]  {$\B$}                                                   (b);
   \draw[->] (a) to node [above] {$a$} (c);
   \draw[->] (b) to node [above] {$\a$}       (d);
   \draw[->] (c) to node [left]  {$\B$}                                                   (d);
   
   \draw[->] (e) to node [left]  {$\B$}                                                   (f);
   \draw[<-] (e) to node [above] {$a^*$} (g);
   \draw[<-] (f) to node [above] {$\a^*$}       (h);
   \draw[->] (g) to node [left]  {$\B$}                                                   (h);
  \end{tikzpicture}
\caption{Commutative diagrams which define the usual Segal-Bargmann transform $\B$.}
\end{figure}
\end{center}

On the $L^2(\R,\mathrm{d}x)$ representation of the oscillator algebra, $a = \frac{1}{\sqrt{2}}\big(\d{\mathrm{d}}{x}+x\big)$ is a lowering operator and similarly $\a = \d{\mathrm{d}}{z}$ is a lowering operator in the Segal-Bargmann representation. Likewise, $a^* = \frac{1}{\sqrt{2}}\big(-\d{\mathrm{d}}{x}+x\big)$ is a raising operator on the $L^2(\R,\mathrm{d}x)$ representation of the oscillator algebra and $\a^* = z$ is a raising operator in the Segal-Bargmann representation. A straightforward induction shows that the Hermite-Gauss functions $h_l$ are mapped to $\frac{1}{\sqrt{l!}} z^l$ under the Segal-Bargmann transform.

In \cite{williams2}, the notion of a coupled supersymmetry (coupled SUSY) was introduced as a generalization of the quantum harmonic oscillator with a built-in supersymmetric nature, informed by supersymmetric quantum mechanics (SUSY QM) \cite{cooper, david}. Whereas SUSY QM only has a Lie superalgebra structure that is not strong enough to fully identify the eigenstructure of the associated quantum systems, coupled SUSYs have associated to them additional $\mathfrak{su}(1,1)$ Lie algebra structures beyond the Lie superalgebra structures which allow for the exact solvability of the eigenstructure.

Recently, there has been a wide range of Segal-Bargmann-like analysis beyond $\R^n$ to Lie groups via heat kernel and geometric quantization techniques \cite{hall1, hall2, stenzel, hilgert1, hilgert2, driver, chan, kirwin}. There have also been Segal-Bargmann transform applications to the study of superalgebras and supersymmetric quantum systems \cite{thienel, barbier1, barbier2}, as well as analogues developed in the quaternionic and Clifford algebra settings \cite{alpay1, cnudde, demartino, diki, eaknipitsari, mourao}, and $q$-deformations and related generalizations \cite{leeuwen, rosenfeld, alpay2}.

In this paper, we will present new generalizations of the usual Segal-Bargmann space and the Segal-Bargmann transform that have a supersymmetric flavor that arise from a specific class of coupled SUSYs, leveraging the $\mathfrak{su}(1,1)$ structure that come with coupled SUSYs.

The remainder of the paper is organized as follows. In \Cref{sec:2}, we will briefly discuss the notion of a coupled supersymmetry, particularly focusing on a family of coupled supersymmetries on $\R$ first introduced in \cite{williams2} of which the quantum harmonic oscillator is a special case. In \Cref{sec:3}, we will develop generalizations of the Segal-Bargmann space on which holomorphic representations of the coupled SUSYs live and remark on the reproducing kernel Hilbert space nature thereof. In \Cref{sec:4}, we will establish generalizations of the Segal-Bargmann transform that act as Lie algebra and Hilbert space isomorphisms between the two representations and discuss the relationships between the generalized Segal-Bargmann spaces. In \Cref{sec:5}, we will discuss the relationship of the coupled SUSY Segal-Bargmann transforms to short-time transforms and coherent states, particularly the work developed in \cite{barut} on $SU(1,1)$ coherent states.

\section{A Class of Coupled Supersymmetries} \label{sec:2}

\subsection{Coupled Supersymmetry}

For the sake of self-containment, we will restate some of the basics of coupled supersymmetry, including basic results without proof.

\begin{definition}
Let $\H_1$ and $\H_2$ be Hilbert spaces, $a:\H_1 \to \H_2$, and $b:\H_2\to\H_1$ be closed, densely-defined operators such that the domains are such that the proceeding identities are well-defined. We say that the ordered quadruplet $\{a,b,\gamma,\delta\}$ where $\gamma,\delta\in\R$ form a \emph{coupled supersymmetry} (\emph{coupled SUSY}) if they satisfy the following relations
\begin{align}
a^*a &= bb^* + \gamma 1_{\H_1} \label{eq:coupledsusy1}\\
aa^* &= b^*b + \delta 1_{\H_2}. \label{eq:coupledsusy2}
\end{align}

\noindent In what follows, we suppress the notation $1_{\H}$ for brevity as it is implied. The operators $a^*a$, $aa^*$, $b^*b$, and $bb^*$ are called Hamiltonians. A coupled SUSY is \emph{unbroken} if $\ker(a)$ and $\ker(b)$ are both nontrivial, \emph{partially unbroken} if exactly one of $\ker(a)$ and $\ker(b)$ is nontrivial, and \emph{broken} if $\ker(a)$ and $\ker(b)$ are both trivial.
\end{definition}

\begin{remark}
Note that taking $b = a$ and $-\gamma = 1 = \delta$ in a coupled SUSY gives the Heisenberg-Weyl commutation relations so that the abstract quantum harmonic oscillator is indeed a special case of a coupled SUSY. However, coupled SUSYs extend beyond the normal harmonic oscillator as will be seen in \Cref{def:2}.
\end{remark}

In many cases, the coupled SUSY operators ($a$, $b$, $a^*$, $b^*$) map a Hilbert space back to itself, but distinguishing the domain and codomain and allowing for the two to be the same makes the diagram chasing in what follows simpler; moreover, the nature of the operators $a$ and $b$ may necessitate defining them on separate densely-defined subspaces of $L^2$ spaces, so distinguishing their domains serves a further purpose. In traditional SUSY terms, $\H_1$ represents the Hilbert space for the first (bosonic) sector, whereas $\H_2$ represents the Hilbert space for the second (fermionic) sector \cite{cooper}.

Coupled SUSYs have an $\mathfrak{su}(1,1)$ Lie algebra structure generated by the raising and lowering operators $a^*b^*$ (or $b^*a^*$) and $ba$ (or $ab$) and the coupled SUSY Hamiltonians $a^*a$ and $bb^*$ (or $aa^*$ and $b^*b$):
\begin{align}
[a^*a, a^*b^*] &= (\delta - \gamma) a^*b^* \\
[a^*a, ba] &= -(\delta - \gamma) ba \\
[a^*b^*, ba] &= -2(\delta-\gamma) \bigg(a^*a - \frac{\gamma}{2}\bigg)
\end{align}

\noindent The Lie structure determines the eigenstructure of coupled SUSYs. In an unbroken coupled SUSY the eigenvalue structure is uniquely determined as $\ker(a)$ and $\ker(b)$ being nontrivial force zero points in the spectra of $a^*a$ and $b^*b$. In (partially) broken coupled SUSYs, the the spectra will exhibit an overall shift. In an unbroken coupled SUSY, the eigenvalues of $a^*a$ are given by $m(\delta-\gamma)$ and $m(\delta-\gamma) + \delta$ where $m\in\N_0$.

\begin{remark}
This ladder structure resembles that of the usual quantum harmonic oscillator with the squares of the creation and annihilation operators which also enjoy an $\mathfrak{su}(1,1)$ Lie algebra structure:
\begin{equation} \label{eq:square_harmonic}
\bigg[a^*a + \frac{1}{2}, \big(a^*\big)^2\bigg] = 2\big(a^*\big)^2, \qquad \bigg[a^*a + \frac{1}{2}, a^2\bigg] = -2a^2, \qquad \bigg[\big(a^*\big)^2, a^2\bigg] = -4\bigg(a^*a + \frac{1}{2}\bigg).
\end{equation}

\noindent This does not, however, encapsulate all coupled SUSYs as seen in \Cref{def:2}. Another connection exists between coupled SUSYs and the harmonic oscillator by way of the ``spinification'' of a two-particle quantum harmonic oscillator system as discussed in \cite{williams2}.
\end{remark}

\subsection{Concrete Realizations of Coupled SUSYs}

In this paper, we will be concerned with a family of coupled SUSYs indexed by $n\in\N$ where $\gamma = -1$ and $\delta = 2n-1$, i.e.
\begin{align}
a^* a &= b b^* - 1 \label{eq:coupledsusy_real1} \\
a a^* &= b^* b + 2n-1. \label{eq:coupledsusy2_real2}
\end{align}

\noindent We will suppress the explicit dependence on $n$ of the operators $a$, $a^*$, $b$, and $b^*$ throughout for ease of notation.

\begin{definition}\label{def:2}
Define the operators $a$ and $b$ on $L^2(\R,\mathrm{d}x)$ by
\begin{equation}
a = \frac{1}{\sqrt{2}}\bigg(\frac{1}{x^{n-1}}\d{\mathrm{d}}{x} + x^n\bigg), \qquad b = \frac{1}{\sqrt{2}}\bigg(\d{\mathrm{d}}{x}\frac{1}{x^{n-1}} + x^n\bigg)
\end{equation}
\end{definition}

These operators were introduced in \cite{williams2}. $a$ can be formally defined on those $f\in L^2(\R,\mathrm{d}x)$ such that $x^n f\in L^2(\R,\mathrm{d}x)$ and $\frac{1}{x^{n-1}} f'\in L^2(\R,\mathrm{d}x)$, similar for $b$. Examples of such functions include the compactly supported smooth functions supported away from zero. A straightforward computation shows that on sufficiently nice functions, e.g. the compactly supported smooth functions supported away from zero, the adjoints $a^*$ and $b^*$ can be represented by
\begin{equation}
a^* = \frac{1}{\sqrt{2}}\bigg(-\d{\mathrm{d}}{x}\frac{1}{x^{n-1}} + x^n\bigg), \qquad b^* = \frac{1}{\sqrt{2}}\bigg(-\frac{1}{x^{n-1}}\d{\mathrm{d}}{x} + x^n\bigg).
\end{equation}

\noindent Note the functional similarity between $a^*$ and $b$ and likewise $a$ and $b^*$. This plays a crucial role in the analysis that follows.

\begin{remark}
Taking $n=1$ in the above reveals the quantum harmonic oscillator ladder operators so that the $n=1$ case in what follows will reduce to the usual Segal-Bargmann transform and Segal-Bargmann space. We will remark on this special case throughout as we build up the coupled SUSY Segal-Bargmann spaces and transforms.
\end{remark}

In a slight abuse of notation in light of the definition of a coupled SUSY, we will reserve $\H_1$ and $\H_2$ henceforth to be $L^2(\R,\mathrm{d}x)$ on which the above representation lives. $\H_1$ represents the Hilbert space corresponding to the eigenfunctions of $a^*a$ (or equivalently, $bb^*$), whereas $\H_2$ represents the Hilbert space corresponding to the eigenfunctions of $aa^*$ (or equivalently, $b^*b$).

\begin{definition}
Define the normalized functions $\psi_0 \in \H_1$ and $\widetilde{\psi}_0 \in \H_2$ by
\begin{equation}
\psi_0(x) = \frac{n^{\frac{1}{2}-\frac{1}{4n}}}{\sqrt{\Gamma\big(\frac{1}{2n}\big)}} e^{-\frac{x^{2n}}{2n}}, \qquad
\widetilde{\psi}_0(x) = \frac{n^{\frac{1}{4n}}}{\sqrt{\Gamma\big(1-\frac{1}{2n}\big)}} x^{n-1}e^{-\frac{x^{2n}}{2n}}
\end{equation}

\noindent Further, define the higher (normalized) eigenfunctions $\psi_l$ of $a^*a$ and $\widetilde{\psi}_l$ of $aa^*$ by
\begin{align}
\psi_{2l} &= \frac{(a^*b^*)^l \psi_0}{\|(a^*b^*)^l \psi_0\|}, \\
\psi_{2l+1} &= \frac{(a^*b^*)^l a^*\widetilde{\psi}_0}{\|(a^*b^*)^l a^*\widetilde{\psi}_0\|}, \\
\widetilde{\psi}_{2l} &= \frac{(b^*a^*)^l\widetilde{\psi}_0}{\|(b^*a^*)^l\widetilde{\psi}_0\|}, \\
\widetilde{\psi}_{2l+1} &= \frac{(b^*a^*)^l b^*\psi_0}{\|(b^*a^*)^l b^*\psi_0\|}.
\end{align}
\end{definition}

A simple computation shows that
\begin{equation}
a \psi_0 = 0, \qquad b \widetilde{\psi}_0 = 0,
\end{equation}

\noindent so that $a^*a \psi_0 = 0$ and $b^*b\widetilde{\psi}_0 = 0$ so that these are indeed eigenfunctions and similarly, $\psi_l$ and $\widetilde{\psi}_l$ are eigenfunctions of their corresponding Hamiltonians, seen by way of the $\mathfrak{su}(1,1)$ Lie algebra structure.

\begin{proposition}
The eigenfunctions of $a^*a$ generated in this way have Rodrigues formulae resembling that for the Hermite-Gauss functions given by
\begin{align}
 (a^*b^*)^l e^{-\frac{x^{2n}}{2n}} &= \frac{1}{2^l} e^{\frac{x^{2n}}{2n}}\bigg(\d{\mathrm{d}}{x}\frac{1}{x^{2n-2}}\d{\mathrm{d}}{x}\bigg)^l e^{-\frac{x^{2n}}{n}} \label{eq:psi_l1} \\
(a^*b^*)^l \bigg(2x^{2n-1}e^{-\frac{x^{2n}}{2n}}\bigg) &= \frac{1}{2^l} e^{\frac{x^{2n}}{2n}}\bigg(\d{\mathrm{d}}{x}\frac{1}{x^{2n-2}}\d{\mathrm{d}}{x}\bigg)^l\bigg(2x^{2n-1} e^{-\frac{x^{2n}}{n}}\bigg). \label{eq:psi_l2}
\end{align}

\noindent Similar formulae exist for the eigenfunctions of $aa^*$. The proof is a straightforward induction.
\end{proposition}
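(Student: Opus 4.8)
The plan is to prove both formulae simultaneously by exhibiting $a^*b^*$ as a conjugate of a pure second-order differential operator by the ground-state weight $w(x) = e^{-\frac{x^{2n}}{2n}}$, so that the $l$-fold composition telescopes. First I would record two intertwining identities obtained by factoring this weight out of each raising operator. Using $\d{\mathrm{d}}{x}\,e^{-\frac{x^{2n}}{2n}} = -x^{2n-1}e^{-\frac{x^{2n}}{2n}}$, a direct computation on the smooth functions supported away from zero gives
\begin{align}
b^* g &= -\frac{1}{\sqrt2}\,\frac{1}{x^{n-1}}\,e^{\frac{x^{2n}}{2n}}\,\d{\mathrm{d}}{x}\left(e^{-\frac{x^{2n}}{2n}}g\right), \\
a^* g &= -\frac{1}{\sqrt2}\,e^{\frac{x^{2n}}{2n}}\,\d{\mathrm{d}}{x}\left(\frac{1}{x^{n-1}}e^{-\frac{x^{2n}}{2n}}g\right).
\end{align}

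Next I would compose these. The weight $\frac{1}{x^{n-1}}e^{-\frac{x^{2n}}{2n}}$ that the outer $a^*$ inserts inside its derivative cancels the leading $e^{\frac{x^{2n}}{2n}}$ of $b^*$ and merges the two $\frac{1}{x^{n-1}}$ factors into $\frac{1}{x^{2n-2}}$, yielding the clean conjugated form
\begin{equation}
a^*b^* g = \frac{1}{2}\,e^{\frac{x^{2n}}{2n}}\,\d{\mathrm{d}}{x}\frac{1}{x^{2n-2}}\d{\mathrm{d}}{x}\left(e^{-\frac{x^{2n}}{2n}}g\right).
\end{equation}
The crucial observation is that the inner weight $e^{-\frac{x^{2n}}{2n}}$ and the outer weight $e^{\frac{x^{2n}}{2n}}$ of adjacent factors are mutually inverse, so in $(a^*b^*)^l$ all intermediate weights cancel in pairs and the product telescopes to
\begin{equation}
(a^*b^*)^l g = \frac{1}{2^l}\,e^{\frac{x^{2n}}{2n}}\left(\d{\mathrm{d}}{x}\frac{1}{x^{2n-2}}\d{\mathrm{d}}{x}\right)^l\left(e^{-\frac{x^{2n}}{2n}}g\right).
\end{equation}

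Both Rodrigues formulae are then immediate specializations. Taking $g = e^{-\frac{x^{2n}}{2n}}$ gives $e^{-\frac{x^{2n}}{2n}}g = e^{-\frac{x^{2n}}{n}}$, which is \eqref{eq:psi_l1}; taking $g = 2x^{2n-1}e^{-\frac{x^{2n}}{2n}}$ gives $e^{-\frac{x^{2n}}{2n}}g = 2x^{2n-1}e^{-\frac{x^{2n}}{n}}$, which is \eqref{eq:psi_l2}. One checks directly from the first intertwining identity that $a^*\widetilde{\psi}_0$ is a constant multiple of $2x^{2n-1}e^{-\frac{x^{2n}}{2n}}$, so this second seed is indeed the one generating the odd eigenfunctions $\psi_{2l+1}$.

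I expect the only delicate point to be the derivation of the two intertwining identities, and in particular the bookkeeping of the $x^{n-1}$ powers and weights when composing them; once the conjugated form is in hand the telescoping is purely formal. A completely equivalent route is the direct induction on $l$ suggested in the statement, pushing $a^*b^*$ through the product with repeated use of the coupled SUSY relations \eqref{eq:coupledsusy_real1}--\eqref{eq:coupledsusy2_real2}; the conjugation argument simply packages that induction into a single weight cancellation.
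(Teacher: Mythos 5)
Your proof is correct, and every step checks out: both intertwining identities follow from $\d{\mathrm{d}}{x}e^{-\frac{x^{2n}}{2n}} = -x^{2n-1}e^{-\frac{x^{2n}}{2n}}$, their composition yields the operator identity
\begin{equation*}
a^*b^* \;=\; \frac{1}{2}\, e^{\frac{x^{2n}}{2n}}\bigg(\d{\mathrm{d}}{x}\frac{1}{x^{2n-2}}\d{\mathrm{d}}{x}\bigg)\, e^{-\frac{x^{2n}}{2n}}
\end{equation*}
on smooth functions away from the origin, powers of a conjugated operator telescope, and your seed check $a^*\widetilde{\psi}_0 \propto x^{2n-1}e^{-\frac{x^{2n}}{2n}}$ (indeed $a^*\big(x^{n-1}e^{-\frac{x^{2n}}{2n}}\big) = \sqrt{2}\,x^{2n-1}e^{-\frac{x^{2n}}{2n}}$) confirms that the second formula governs the odd eigenfunctions. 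The paper gives no written argument beyond the remark that the proof is ``a straightforward induction,'' so your route differs in packaging rather than substance: where the suggested induction would push one factor of $a^*b^*$ through the level-$l$ formula, rediscovering the same weight cancellation at each step, you isolate the one-step conjugation once and obtain the $l$-fold statement formally. This buys three things: the factor $2^{-l}$ and the operator $\big(\d{\mathrm{d}}{x}\frac{1}{x^{2n-2}}\d{\mathrm{d}}{x}\big)^l$ are visibly forced rather than guessed and verified; both seeds (indeed any seed $g$) are handled by a single computation; and your two identities are precisely the raising-operator counterparts, with weight $e^{+\frac{x^{2n}}{2n}}$, of the ``integrating factor''-like identities the paper states for the lowering operators in \Cref{sec:4}, so your argument makes explicit a structural connection the paper leaves implicit. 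The one point worth flagging is domain bookkeeping: the coefficients $x^{-(n-1)}$ and $x^{-(2n-2)}$ are singular at the origin, so the conjugation identity holds a priori only for $x\neq 0$; since the relevant combinations applied to your seeds are in fact smooth (e.g.\ $\frac{1}{x^{2n-2}}\d{\mathrm{d}}{x}e^{-\frac{x^{2n}}{n}} = -2x\,e^{-\frac{x^{2n}}{n}}$), the formulae extend across $x=0$, which matches the paper's own formal treatment of these operators.
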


\begin{remark}
When $n=1$, $\psi_0(x) = \pi^{-\frac{1}{4}} e^{-\frac{x^2}{2}} = \widetilde{\psi}_0(x)$, giving the usual quantum harmonic oscillator ground state, and successive application of the raising operators gives the quantum harmonic oscillator excited states, i.e. the Hermite-Gauss functions.
\end{remark}

\begin{proposition}
The eigenfunctions $\psi_l$ and $\widetilde{\psi}_l$ form orthonormal bases for $\H_1$ and $\H_2$, respectively.
\end{proposition}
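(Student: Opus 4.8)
The plan is to establish orthonormality and completeness separately: the former is essentially immediate, while the latter reduces, after an explicit change of variables, to the classical completeness of the generalized Laguerre polynomials.

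\emph{Orthonormality.} Each $\psi_l$ is a unit vector by definition, so only orthogonality must be checked. First I would record parities using the parity operator $Pf(x)=f(-x)$: since $P\frac{\mathrm{d}}{\mathrm{d}x}P = -\frac{\mathrm{d}}{\mathrm{d}x}$ and $Px^kP=(-1)^kx^k$, a short computation gives $Pa^*P=(-1)^n a^*$ and $Pb^*P=(-1)^n b^*$, so $a^*b^*$ and $b^*a^*$ commute with $P$. As $\psi_0$ is even and $a^*\widetilde\psi_0 \propto x^{2n-1}e^{-x^{2n}/2n}$ is odd, it follows that $\psi_{2l}$ is even and $\psi_{2l+1}$ is odd; in particular every even-indexed eigenfunction is orthogonal to every odd-indexed one. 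Within a fixed parity class, the $\psi_l$ are eigenfunctions of the symmetric operator $a^*a$ with the eigenvalues $m(\delta-\gamma)=2nl$ (for $\psi_{2l}$) and $m(\delta-\gamma)+\delta = 2nl+(2n-1)$ (for $\psi_{2l+1}$) recalled above; these are pairwise distinct as $l$ varies, so $\langle a^*a\,\psi_j,\psi_k\rangle = \langle \psi_j, a^*a\,\psi_k\rangle$ forces $\langle\psi_j,\psi_k\rangle=0$ for $j\neq k$. This yields orthonormality.

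\emph{Completeness.} Since $L^2(\R,\mathrm{d}x)=L^2_{\mathrm{even}}\oplus L^2_{\mathrm{odd}}$ and the two families respect this splitting, it suffices to prove that $\{\psi_{2l}\}_{l\ge 0}$ is complete in $L^2_{\mathrm{even}}$ and $\{\psi_{2l+1}\}_{l\ge 0}$ is complete in $L^2_{\mathrm{odd}}$. The key computation is the substitution $t=x^{2n}/n$ on $(0,\infty)$. Starting from the identity $a^*a=\frac{1}{2}\big(-\frac{\mathrm{d}}{\mathrm{d}x}\frac{1}{x^{2n-2}}\frac{\mathrm{d}}{\mathrm{d}x}+x^{2n}-1\big)$ and the Rodrigues formulae \eqref{eq:psi_l1}--\eqref{eq:psi_l2}, I would show that $\frac{\mathrm{d}}{\mathrm{d}x}\frac{1}{x^{2n-2}}\frac{\mathrm{d}}{\mathrm{d}x}$ transforms into $2(2nt\,\partial_t^2+\partial_t)$ and that the eigenvalue equation for the even sector collapses to the Laguerre equation $t\,q''+(\tfrac{1}{2n}-t)q'+l\,q=0$. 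Hence $\psi_{2l}(x)=c_l\,e^{-x^{2n}/2n}\,L_l^{(\alpha)}(x^{2n}/n)$ with $\alpha=\frac{1}{2n}-1>-1$, and the analogous computation in the odd sector (carrying the prefactor $x^{2n-1}$) gives $\psi_{2l+1}(x)=\widehat c_l\,x^{2n-1}e^{-x^{2n}/2n}\,L_l^{(\beta)}(x^{2n}/n)$ with $\beta=1-\frac{1}{2n}>-1$.

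With these explicit forms in hand, the final step is to exhibit the unitaries that turn the problem into a standard one. Restriction to $(0,\infty)$ identifies $L^2_{\mathrm{even}}$ (resp.\ $L^2_{\mathrm{odd}}$) isometrically with $L^2((0,\infty),\mathrm{d}x)$, and the change of variables $t=x^{2n}/n$ followed by multiplication by the appropriate power of $t$ times $e^{t/2}$ produces a unitary onto $L^2((0,\infty),t^{\alpha}e^{-t}\,\mathrm{d}t)$ (resp.\ $L^2((0,\infty),t^{\beta}e^{-t}\,\mathrm{d}t)$) under which $\psi_{2l}$ maps to a nonzero multiple of $L_l^{(\alpha)}$ and $\psi_{2l+1}$ to a nonzero multiple of $L_l^{(\beta)}$. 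Because $\{L_l^{(\alpha)}\}_{l\ge 0}$ is a complete orthogonal basis of $L^2((0,\infty),t^{\alpha}e^{-t}\,\mathrm{d}t)$ for every $\alpha>-1$, completeness of each family follows. The corresponding statement for $\widetilde\psi_l$ in $\H_2$ is obtained verbatim after interchanging the roles of $a,a^*$ and $b^*,b$ (the functional symmetry noted after the definition of $a^*$ and $b^*$), which only alters the two Laguerre parameters. I expect the main obstacle to be this explicit Laguerre identification together with the bookkeeping that makes the above maps genuinely unitary---in particular tracking the Jacobian of $t=x^{2n}/n$ and the behavior of the eigenfunctions at the origin, where the coefficient $1/x^{2n-2}$ is singular, so as to land on exactly the weights $t^{\alpha}e^{-t}$ and $t^{\beta}e^{-t}$; the completeness of the Laguerre polynomials themselves is classical and may simply be cited.
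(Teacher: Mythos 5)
Your proposal is correct, but it takes a genuinely different route from the paper: the paper gives no self-contained argument at all, instead deferring to \cite{williams1}, where completeness of functions directly related to the $\psi_l$ is proved by adapting Akhiezer's argument for the Fourier--Bessel (Hankel) transform \cite{akhiezer}, and noting that this adapts to $\psi_l$ and $\widetilde{\psi}_l$. You instead give a direct reduction: parity (via $PaP=(-1)^n a$, $Pb^*P=(-1)^n b^*$, so $a^*b^*$ and $b^*a^*$ commute with $P$) splits $L^2(\R,\mathrm{d}x)$ into even and odd sectors, and the substitution $t=x^{2n}/n$ turns the eigenvalue equation for $a^*a=\frac{1}{2}\big(-\frac{\mathrm{d}}{\mathrm{d}x}\frac{1}{x^{2n-2}}\frac{\mathrm{d}}{\mathrm{d}x}+x^{2n}-1\big)$ into the Laguerre equation, reducing everything to the classical completeness of $\{L_l^{(\alpha)}\}_{l\ge 0}$ in $L^2((0,\infty),t^{\alpha}e^{-t}\,\mathrm{d}t)$ for $\alpha>-1$. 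I verified the computational claims: $\frac{\mathrm{d}}{\mathrm{d}x}\frac{1}{x^{2n-2}}\frac{\mathrm{d}}{\mathrm{d}x}$ does become $2(2nt\,\partial_t^2+\partial_t)$, the singular $1/t$ terms cancel exactly for the prefactors $x^{2n-1}$ (and $x^{n-1}$, $x^{n}$ in the second sector), the parameters $\alpha=\frac{1}{2n}-1$ and $\beta=1-\frac{1}{2n}$ (and $\mp\frac{1}{2n}$ for $\H_2$) match the weights produced by the Jacobian, and the eigenvalues $2nl$ and $2nl+2n-1$ agree with $m(\delta-\gamma)$ and $m(\delta-\gamma)+\delta$. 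What each approach buys: yours is elementary, self-contained, and yields the closed forms $\psi_{2l}\propto e^{-x^{2n}/2n}L_l^{(\frac{1}{2n}-1)}(x^{2n}/n)$ and $\psi_{2l+1}\propto x^{2n-1}e^{-x^{2n}/2n}L_l^{(1-\frac{1}{2n})}(x^{2n}/n)$ as a useful byproduct, at the cost of the unitarity bookkeeping you flag; the paper's route leverages existing $\Phi_n$/Hankel-type transform machinery and avoids the explicit classical-polynomial identification. One small correction to your ``verbatim'' claim for $\H_2$: since $\widetilde{\psi}_0\propto x^{n-1}e^{-x^{2n}/2n}$ and $b^*\psi_0\propto x^{n}e^{-x^{2n}/2n}$, the parities of $\widetilde{\psi}_{2l}$ and $\widetilde{\psi}_{2l+1}$ are $(-1)^{n-1}$ and $(-1)^{n}$, so which family occupies the even versus odd sector depends on the parity of $n$; the two families always have opposite parities, so your argument goes through unchanged, but the sector assignment is not literally the same as in $\H_1$ and deserves a sentence.
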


A proof of related facts can be found in \cite{williams1} and is based on a proof by Akhiezer for the Fourier-Bessel (Hankel) transform \cite{akhiezer}. Therein, functions directly related to the functions $\psi_l$ were shown to give an orthonormal basis for $L^2(\R,\mathrm{d}x)$. The proof can be adapted readily to $\psi_l$ and $\widetilde{\psi}_l$ to show completeness.

The coupled SUSY ladder structure is summarized in the diagrams below. Note that the notation differs slightly from \cite{williams2} for ease of discussion.

\begin{figure}[!ht]
\centering
\begin{tikzpicture}
\node (a) at (1.5,0) {};
\node (b) at (3.5,0) {};
\draw (a) to node [above] {$\H_1$} (b);

\node (c) at (7.5,0) {};
\node (d) at (9.5,0) {};
\draw (c) to node [above] {$\H_2$} (d);

\draw (1,-5)--(4,-5);
\draw (1,-4)--(4,-4);
\draw (1,-3.5)--(4,-3.5);
\draw (1,-2.5)--(4,-2.5);
\draw (1,-2)--(4,-2);
\draw (1,-1)--(4,-1);
\draw (1,-0.5)--(4,-0.5);

\draw (7,-4)--(10,-4);
\draw (7,-3.5)--(10,-3.5);
\draw (7,-2.5)--(10,-2.5);
\draw (7,-2)--(10,-2);
\draw (7,-1)--(10,-1);
\draw (7,-0.5)--(10,-0.5);

\node (1a) at (4,-4) {};
\node (1b) at (7,-4) {};

\node (2a) at (4,-2.5) {};
\node (2b) at (7,-1) {};

\node at (0.5,-5) {$\psi_0$};
\node at (0.5,-4) {$\psi_1$};
\node at (0.5,-3.5) {$\psi_2$};
\node at (0.5,-2.5) {$\psi_3$};
\node at (0.5,-2) {$\psi_4$};
\node at (0.5,-1) {$\psi_5$};
\node at (0.5,-0.5) {$\psi_6$};

\node at (10.5,-4) {$\wt\psi_0$};
\node at (10.5,-3.5) {$\wt\psi_1$};
\node at (10.5,-2.5) {$\wt\psi_2$};
\node at (10.5,-2) {$\wt\psi_3$};
\node at (10.5,-1) {$\wt\psi_4$};
\node at (10.5,-0.5) {$\wt\psi_5$};

\draw[->] (1a) to [bend right=30] node[below] {$a$} (1b);
\draw[->] (1b) to [bend right=30] node[below] {$a^*$} (1a);

\draw[->] (2a) to [bend right=30] node[below] {$b^*$} (2b);
\draw[->] (2b) to [bend right=30] node[below] {$b$} (2a);
\end{tikzpicture}
\caption{The actions of $a$, $b$, $a^*$, and $b^*$ in a coupled SUSY.}

\hfill

\begin{tikzpicture}
\node (a) at (1.5,0) {};
\node (b) at (3.5,0) {};
\draw (a) to node [above] {$\H_1$} (b);

\node (c) at (7.5,0) {};
\node (d) at (9.5,0) {};
\draw (c) to node [above] {$\H_2$} (d);

\draw (1,-2)--(4,-2);
\draw (1,-1)--(4,-1);
\draw (1,-0.5)--(4,-0.5);

\draw (7,-1)--(10,-1);
\draw (7,-0.5)--(10,-0.5);

\node (1a) at (4,-0.5) {};
\node (1b) at (7,-0.5) {};

\node (2a) at (4,-2) {};
\node (2b) at (7,-0.5) {};

\node (3a) at (0.1,-2) {};
\node (3b) at (0.1,-0.5) {};

\node at (0.5,-2) {$\psi_0$};
\node at (0.5,-1) {$\psi_1$};
\node at (0.5,-0.5) {$\psi_2$};

\node at (10.5,-1) {$\wt\psi_0$};
\node at (10.5,-0.5) {$\wt\psi_1$};

\draw[->] (1b) to [bend right=30] node[below] {$a^*$} (1a);

\draw[->] (2a) to [bend right=30] node[below] {$b^*$} (2b);

\draw[->] (3a) to [bend left=30] node[left] {$a^*b^*$} (3b);
\end{tikzpicture}
\caption{The raising operator structure for the first sector in a coupled SUSY.}

\hfill

\begin{tikzpicture}
\node (a) at (1.5,0) {};
\node (b) at (3.5,0) {};
\draw (a) to node [above] {$\H_1$} (b);

\node (c) at (7.5,0) {};
\node (d) at (9.5,0) {};
\draw (c) to node [above] {$\H_2$} (d);

\draw (1,-2)--(4,-2);
\draw (1,-1)--(4,-1);
\draw (1,-0.5)--(4,-0.5);

\draw (7,-1)--(10,-1);
\draw (7,-0.5)--(10,-0.5);

\node (1a) at (4,-0.5) {};
\node (1b) at (7,-0.5) {};

\node (2a) at (4,-2) {};
\node (2b) at (7,-0.5) {};

\node (3a) at (0.1,-2) {};
\node (3b) at (0.1,-0.5) {};

\node at (0.5,-2) {$\psi_0$};
\node at (0.5,-1) {$\psi_1$};
\node at (0.5,-0.5) {$\psi_2$};

\node at (10.5,-1) {$\wt\psi_0$};
\node at (10.5,-0.5) {$\wt\psi_1$};

\draw[<-] (1b) to [bend right=30] node[below] {$a$} (1a);

\draw[<-] (2a) to [bend right=30] node[below] {$b$} (2b);

\draw[<-] (3a) to [bend left=30] node[left] {$ba$} (3b);
\end{tikzpicture}
\caption{The lowering operator structure for the first sector in a coupled SUSY.}
\end{figure}

\section{Constructing the Coupled SUSY Segal-Bargmann Spaces} \label{sec:3}

The following relations hold for the complex variable $z\in\C$ and holomorphic derivative $\d{\mathrm{d}}{z}$ as operators:
\begin{align}
\big(z^n\big)\bigg(\frac{1}{z^{n-1}}\d{\mathrm{d}}{z}\bigg) &= \bigg(\d{\mathrm{d}}{z} \frac{1}{z^{n-1}}\bigg)\big(z^n\big) - 1, \\
\bigg(\frac{1}{z^{n-1}}\d{\mathrm{d}}{z}\bigg)\big(z^n\big) &= \big(z^n\big)\bigg(\d{\mathrm{d}}{z}\frac{1}{z^{n-1}}\bigg) + 2n-1,
\end{align}

\noindent mimicking the coupled SUSY relations in \eqref{eq:coupledsusy1} and \eqref{eq:coupledsusy2}.

In order for these operators and their corresponding products to be well-defined on entire functions, we must restrict the domains for $\frac{1}{z^{n-1}}\d{\mathrm{d}}{z}$ and $\d{\mathrm{d}}{z}\frac{1}{z^{n-1}}$. Define $\dom\big(\frac{1}{z^{n-1}}\d{\mathrm{d}}{z}\big)$ to be those entire functions spanned by $\{1, z^{2n-1}, z^{2n}, z^{4n-1}, z^{4n}, \ldots\}$ and $\dom\big(\d{\mathrm{d}}{z}\frac{1}{z^{n-1}}\big)$ to be those entire functions spanned by \sloppy $\{z^{n-1}, z^n, z^{3n-1}, z^{3n}, \ldots\}$. It is easy to see that these are the maximal subspaces for which the coupled SUSY domain and range conditions hold.

\begin{definition}
To this end, we define the following not-yet-closed spaces of entire functions:
\begin{align}
\Os_1(\C) &= \operatorname{span}\{1, z^{2n-1}, z^{2n}, z^{4n-1}, z^{4n}, \ldots\} \label{eq:coupled_susy_complex_1} \\
\Os_2(\C) &= \operatorname{span}\{z^{n-1}, z^{n}, z^{3n-1}, z^{3n}, \ldots\} \label{eq:coupled_susy_complex_2}
\end{align}
\end{definition}

\begin{remark}
Such choices for these spaces further distinguish this from the square operator representation given in \eqref{eq:square_harmonic} as these would lead to the usual Segal-Bargmann space---or perhaps the even and odd subspaces thereof, depending on interpretation.
\end{remark}

\begin{definition}
Let $\rho_1$ and $\rho_2$ be not-yet-defined weights on $\Os_1(\C)$ and $\Os_2(\C)$, respectively, defining inner products $\langle \cdot, \cdot\rangle_1$ and $\langle \cdot, \cdot\rangle_2$, respectively, by
\begin{align}
\langle f_1, g_1\rangle_1 &= \int_{\C} f_1(z) \overline{g_1(z)} \rho_1(z,\bar{z})\,\mathrm{d}A(z) \\
\langle f_2, g_2\rangle_2 &= \int_{\C} f_2(z) \overline{g_2(z)} \rho_2(z,\bar{z})\,\mathrm{d}A(z).
\end{align}

\noindent Let $\|\cdot\|_1$ denote the norm resulting from $\langle \cdot, \cdot\rangle_1$ and Let $\|\cdot\|_2$ denote the norm resulting from $\langle \cdot, \cdot\rangle_2$.
\end{definition}

Much like in \cite{bargmann1}, $\rho_1$ and $\rho_2$ will be defined so that the adjoints of $\frac{1}{z^{n-1}}\d{\mathrm{d}}{z}$ and $\d{\mathrm{d}}{z}\frac{1}{z^{n-1}}$ are both $z^n$, agreeing with the desired coupled SUSY structure.

If we wish to realize $\frac{1}{z^{n-1}}\d{\mathrm{d}}{z}$ and $\d{\mathrm{d}}{z}\frac{1}{z^{n-1}}$ as a representation of the coupled SUSY given in \eqref{eq:coupled_susy_complex_1} and \eqref{eq:coupled_susy_complex_2}, then the following relations must hold for $f_1,g_1 \in \Os_1(\C)$ and $f_2, g_2\in \Os_2(\C)$:
\begin{align}
\int_{\C} \frac{1}{z^{n-1}}\d{\mathrm{d}}{z}f_1(z) \overline{g_2(z)} \rho_2(z,\bar{z})\,\mathrm{d}A(z) &= \int_{\C} f_1(z) \overline{z^n g_2(z)} \rho_1(z,\bar{z})\,\mathrm{d}A(z) \label{eq:rho_1_condition}\\
\int_{\C} \d{\mathrm{d}}{z}\frac{1}{z^{n-1}}f_2(z) \overline{g_1(z)} \rho_1(z,\bar{z})\,\mathrm{d}A(z) &= \int_{\C} f_2(z) \overline{z^n g_1(z)} \rho_2(z,\bar{z})\,\mathrm{d}A(z) \label{eq:rho_2_condition}
\end{align}

\noindent Integrating by parts in \eqref{eq:rho_1_condition} and \eqref{eq:rho_2_condition}, making use of entirety of $g_1$ and $g_2$, and assuming---for now---that the boundary terms go to zero gives
\begin{align}
\int_{\C} f_1(z) \overline{g_2(z)} \overline{z}^n\rho_1(z,\bar{z})\,\mathrm{d}A(z) &= \int_{\C} f_1(z) \overline{g_2(z)} \bigg(-\d{\partial}{z} \frac{1}{z^{n-1}} \rho_2(z,\bar{z})\bigg)\,\mathrm{d}A(z), \\
\int_{\C} f_2(z) \overline{g_1(z)} \overline{z}^n\rho_2(z,\bar{z})\,\mathrm{d}A(z) &= \int_{\C} f_2(z) \overline{g_1(z)} \bigg(-\frac{1}{z^{n-1}}\d{\partial}{z} \rho_1(z,\bar{z})\bigg)\,\mathrm{d}A(z).
\end{align}

\noindent It is easy to see that the following relationships for $\rho_1$ and $\rho_2$ guarantee that the above hold:
\begin{align}
-\dfrac{\partial}{\partial z} \dfrac{1}{z^{n-1}} \rho_2(z,\bar{z}) &= \bar{z}^n \rho_1(z,\bar{z})\\ 
-\dfrac{1}{z^{n-1}} \dfrac{\partial}{\partial z} \rho_1(z,\bar{z}) &= \bar{z}^n \rho_2(z,\bar{z}).
\end{align}

\noindent Decoupling these equations leads to the following partial differential equations for $\rho_1$ and $\rho_2$:
\begin{align}
\d{\partial}{z} \frac{1}{z^{2n-2}} \d{\partial}{z} \rho_1(z,\bar{z}) &= \bar{z}^{2n} \rho_1(z,\bar{z}) \\
\frac{1}{z^{n-1}}\frac{\partial^2}{\partial z^2} \frac{1}{z^{n-1}} \rho_2(z,\bar{z}) &= \bar{z}^{2n} \rho_2(z,\bar{z}).
\end{align}

\begin{definition}
Define $\rho_1:\C^2\to\R$ and $\rho_2:\C^2\to\R$ to be
\begin{align}
 \rho_1(z,\overline{z}) &= \frac{2}{(2n)^{\frac{1}{2n}} \pi \Gamma\big(\frac{1}{2n}\big)} (z\bar{z})^{n-\frac{1}{2}} K_{1-\frac{1}{2n}}\bigg(\frac{(z\bar{z})^n}{n}\bigg) \\
 \rho_2(z,\overline{z}) &= \frac{2}{(2n)^{\frac{1}{2n}} \pi \Gamma\big(\frac{1}{2n}\big)} (z\bar{z})^{n-\frac{1}{2}} K_{\frac{1}{2n}}\bigg(\frac{(z\bar{z})^n}{n}\bigg),
\end{align}

\noindent where $K_{\nu}$ is the modified Bessel function of the second kind \cite[Eq.~10.27.4]{nist} given by
\begin{equation}
K_{\nu}(z) = \frac{\pi}{2\sin(\nu\pi)}(I_{-\nu}(z) - I_{\nu}(z)),
\end{equation}

\noindent and $I_{\nu}$ is the modified Bessel function of the first kind \cite[Eq.~10.25.2]{nist} given by
\begin{equation}
I_{\nu}(z) = \bigg(\frac{z}{2}\bigg)^{\nu} \sum_{l=0}^{\infty} \frac{1}{\Gamma(l+\nu+1)l!} \bigg(\frac{z}{2}\bigg)^{2l}.
\end{equation}
\end{definition}

From these representations in terms of the modified Bessel function $K_{\nu}$, we can see that $\rho_1$ and $\rho_2$ solve the above partial differential equations. Note that both $\rho_1$ and $\rho_2$ share a normalization factor of $\frac{2}{(2n)^{\frac{1}{2n}} \pi \Gamma\big(\frac{1}{2n}\big)}$. This is due to the coupled nature of the two weight functions, i.e. they cannot be separately normalized to $1$.

In order for the inner products $\langle \cdot,\cdot\rangle_1$ and $\langle\cdot,\cdot\rangle_2$ to be positive, $\rho_1$ and $\rho_2$ should be positive functions. The modified Bessel function $K_{\nu}$ has the following integral representation \cite[Eq.~10.32.9]{nist}:
\begin{equation}
K_{\nu}(x) = \int_0^{\infty} e^{-x\cosh(t)} \cosh(\nu t)\,\mathrm{d}t, \qquad\qquad |\operatorname{ph}(x)| < \frac{\pi}{2}.
\end{equation}

\noindent For positive $x$, the integrand is real and positive. Taking $x = z\bar{z}$, where $z\in\C$, we see that $K_{\nu}\big(\frac{(z\bar{z})^n}{n}\big)$ is positive and thus so are $\rho_1$ and $\rho_2$.

\begin{remark}
When $n=1$, $\rho_1$ and $\rho_2$ are identical and reduce to a single weight $\rho$ which is given by
\begin{equation}
\rho(z,\bar{z}) = \frac{1}{\pi}\sqrt{\frac{2}{\pi}} |z| K_{\frac{1}{2}}\big(|z|^2\big).
\end{equation}

\noindent Noting that for $x > 0$, $K_{\frac{1}{2}}(x) = \sqrt{\frac{\pi}{2x}} e^{-x}$, \cite[Eq.~10.39.2]{nist} we obtain
\begin{equation}
\rho(z,\bar{z}) = \frac{1}{\pi} e^{-z\bar{z}},
\end{equation}

\noindent matching the usual Segal-Bargmann space weight function as expected.
\end{remark}

The modified Bessel function $K_{\nu}$ has a nice asymptotic expansion as $x\to\infty$ \cite[Eq.~10.40.2]{nist}:
\begin{equation}
 K_{\nu}(x) \sim \sqrt{\frac{\pi}{2x}} e^{-x}\bigg(1 + \frac{4\nu^2-1}{8x} + O(x^{-2})\bigg).
\end{equation}

\noindent Thus, asymptotically, $\rho_1$ and $\rho_2$ have the following asymptotic expansions:
\begin{align}
 \rho_1(z,\overline{z}) &\sim \frac{1}{(2n)^{\frac{1}{2n}} \Gamma\big(\frac{1}{2n}\big)}\sqrt{\frac{2}{\pi}} |z|^{n-1} e^{-\frac{|z|^{2n}}{n}}\bigg(1+ \alpha |z|^{-2n} + O(z^{-4n})\bigg) \\
 \rho_2(z,\overline{z}) &\sim \frac{1}{(2n)^{\frac{1}{2n}} \Gamma\big(\frac{1}{2n}\big)}\sqrt{\frac{2}{\pi}} |z|^{n-1} e^{-\frac{|z|^{2n}}{n}}\bigg(1+\alpha |z|^{-2n} + O(z^{-4n})\bigg).
\end{align}

\noindent Therefore the weights $\rho_1$ and $\rho_2$ are exponentially decaying, much as in the case of the traditional Segal-Bargmann space, which gives that the monomials $z^n$ have finite norm. Indeed for $n=1$, the asymptotic expansion is exact and is a Gaussian in $|z|$. This informs the assumption that the boundary terms in the integration by parts in \eqref{eq:rho_1_condition} and \eqref{eq:rho_2_condition} vanish.

Much as in the case of the Segal-Bargmann space, the functions $z^{2nl}$ and $z^{2nl+2n-1}$ in $\Os_1(\C)$ and $z^{2nl+n-1}$ and $z^{2nl+n}$ in $\Os_2(\C)$ are not normalized.

\begin{definition} Define the normalized functions $e_l$ and $\widetilde{e}_l$ by
\begin{align}
e_l(z) &= \begin{cases} \sqrt{\frac{\Gamma\big(\frac{1}{2n}\big)}{(2n)^{2k} \Gamma\big(k+\frac{1}{2n}\big)k!}} z^{2nk}, & l = 2k \\ \sqrt{\frac{\Gamma\big(\frac{1}{2n}\big)}{(2n)^{2k+2-\frac{1}{n}} \Gamma\big(k+2-\frac{1}{2n}\big)k!}} z^{2nk+2n-1}, & l = 2k+1 \end{cases} \\
\widetilde{e}_l(z) &= \begin{cases} \sqrt{\frac{\Gamma\big(\frac{1}{2n}\big)}{(2n)^{2k+1-\frac{1}{n}} \Gamma\big(k+1-\frac{1}{2n}\big)k!}} z^{2nk+n-1}, & l = 2k \\ \sqrt{\frac{\Gamma\big(\frac{1}{2n}\big)}{(2n)^{2k+1} \Gamma\big(k+1+\frac{1}{2n}\big)k!}} z^{2nk+n}, & l = 2k+1 \end{cases}
\end{align}

\noindent Note that $e_0 \equiv 1$, but $\widetilde{e}_0 \not\equiv 1$ in general, a sharp contrast from typical Segal-Bargmann spaces.
\end{definition}

\begin{definition}
Let $\F_1$ and $\F_2$ be the holomorphic function spaces corresponding to $\Os_1(\C)$ and $\Os_2(\C)$, respectively, i.e.
\begin{align}
\F_1 &= \bigg\{ f_1 = \sum_l c_l e_l \biggm| \sum_l |c_l|^2 < \infty\bigg\}, \\
\F_2 &= \bigg\{ f_2 = \sum_l d_l \widetilde{e}_l \biggm| \sum_l |d_l|^2 < \infty\bigg\}.
\end{align}
\end{definition}

$\F_1$ and $\F_2$ are Hilbert spaces as they are equivalent to an $\ell^2$ space. Standard arguments \cite{hallnotes} show that point evaluation is a bounded linear functional on $\F_1$ and $\F_2$ which allows norm convergence to pass to uniform convergence on compact sets. As uniform limits of holomorphic functions on compact sets are again holomorphic, we can conclude that $\F_1$ and $\F_2$ are indeed comprised of entire functions---not merely $L^2$ limits of entire functions.

With the establishment of the holomorphic function spaces $\F_1$ and $\F_2$, we can define the holomorphic realization of the coupled SUSY operators.

\begin{definition}
Define the operators $\a: \F_1 \to \F_2$ and $\b:\F_2\to\F_1$ by
\begin{align}
\a f_1(z) &= \frac{1}{z^{n-1}}\d{\mathrm{d}}{z}f_1(z) \\
\b f_2(z) &= \d{\mathrm{d}}{z}\frac{1}{z^{n-1}}f_2(z).
\end{align}
\end{definition}

Since $\a:\F_1\to\F_2$, $\a^*:\F_2\to\F_1$, and similarly since $\b:\F_2\to\F_1$, $\b^*:\F_1\to\F_2$. From previous arguments by way of the constructions of $\rho_1$ and $\rho_2$, we have that
\begin{align}
\a^* f_2(z) &= z^n f_2(z) \\
\b^* f_1(z) &= z^n f_1(z).
\end{align}

With the adjoints now established, we see that the ordered quadruplet $\{\a, \b, -1, 2n-1\}$ forms a coupled SUSY on the spaces $\F_1$ and $\F_2$.

In a further parallel with the typical Segal-Bargmann space, the holomorphic function spaces $\F_1$ and $\F_2$ are reproducing kernel Hilbert spaces \cite{paulsen}, a fact that will prove useful in the next section. We state the formal definition of a reproducing kernel Hilbert space (RKHS) given in \cite{paulsen} for self-containment.

\begin{definition}
Let $X$ be a set and $\H$ be a vector subspace of the complex-valued functions defined on $X$ with Hilbert space structure. $\H$ is called a \emph{reproducing kernel Hilbert space} if the evaluation functional is a continuous linear functional on $\H$. The function $f_x$ satisfying $\langle h, f_x\rangle = h(x)$ for all $x\in X$ and $h\in\H$ is called the \emph{reproducing kernel}.
\end{definition}

\begin{theorem} \label{thm:1}
The holomorphic function spaces $\F_1$ and $\F_2$ are reproducing kernel Hilbert spaces with the respective reproducing kernels $F_w$ and $\widetilde{F}_w$ given by
\begin{align}
F_w(z) &= \sum_{l=0}^{\infty} e_l(z)\overline{e_l(w)} \\
\widetilde{F}_w(z) &= \sum_{l=0}^{\infty} \widetilde{e}_l(z) \overline{\widetilde{e}_l(w)}
\end{align}
\end{theorem}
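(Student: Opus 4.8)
The plan is to lean on the fact, already recorded just before the statement, that point evaluation is a bounded linear functional on $\F_1$ and on $\F_2$. Granting this, existence of a reproducing kernel is immediate: for fixed $w\in\C$ the Riesz representation theorem applied to the bounded functional $f_1\mapsto f_1(w)$ on the Hilbert space $\F_1$ yields a unique $F_w\in\F_1$ with $\langle f_1,F_w\rangle_1 = f_1(w)$ for all $f_1\in\F_1$, which is precisely the RKHS property. I would carry out the argument for $\F_1$ and then note that it transfers verbatim to $\F_2$ upon replacing $e_l$ by $\widetilde{e}_l$ and $\langle\cdot,\cdot\rangle_1$ by $\langle\cdot,\cdot\rangle_2$.

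Next I would identify $F_w$ explicitly by expanding it in the orthonormal basis, $F_w=\sum_l c_l(w)\,e_l$, and testing the reproducing identity against the basis elements. Since the inner product is conjugate-linear in its second slot, taking $f_1=e_m$ gives
\begin{equation}
e_m(w)=\langle e_m,F_w\rangle_1=\overline{c_m(w)},
\end{equation}
so $c_m(w)=\overline{e_m(w)}$ and hence $F_w=\sum_l \overline{e_l(w)}\,e_l$, that is, $F_w(z)=\sum_{l=0}^\infty e_l(z)\overline{e_l(w)}$, the claimed formula. To confirm this is genuinely the kernel I would check the reproducing property directly: for $f_1=\sum_l c_l e_l$ the conjugation in the coefficients $\overline{e_l(w)}$ is undone by the conjugate-linearity of the second slot, giving $\langle f_1,F_w\rangle_1=\sum_l c_l\,e_l(w)=f_1(w)$, where the interchange of summation and inner product is justified by $\ell^2$ convergence.

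The one point genuinely requiring care—what I regard as the principal obstacle—is verifying that the series actually defines an element of $\F_1$, equivalently that $\sum_l |e_l(w)|^2<\infty$ for every $w$. By Cauchy--Schwarz this is exactly equivalent to boundedness of evaluation at $w$ (since $|f_1(w)|\le\|f_1\|_1\big(\sum_l|e_l(w)|^2\big)^{1/2}$), so it is in principle subsumed by the cited result; but to keep the proof self-contained I would insert an explicit convergence estimate. Using the closed forms of the normalizing constants in the definition of $e_l$ together with Stirling's asymptotics for the Gamma function, the coefficient multiplying $|z|^{4nk}$ (respectively $|z|^{4nk+2(2n-1)}$) in $\sum_l|e_l(z)|^2$ has denominator of order $\Gamma\big(k+\tfrac{1}{2n}\big)\,k!$ (respectively $\Gamma\big(k+2-\tfrac{1}{2n}\big)\,k!$), each growing like $(k!)^2$ up to polynomial factors, which dominates the geometric growth of $(2n)^{2k}|z|^{4nk}$. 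Hence the series converges for all $z\in\C$, in fact locally uniformly, so $F_w$ is a well-defined element of $\F_1$ (and therefore entire, by the earlier identification of $\F_1$ with a space of entire functions). The identical computation for $\F_2$ produces $\widetilde{F}_w(z)=\sum_{l=0}^\infty \widetilde{e}_l(z)\overline{\widetilde{e}_l(w)}$, completing the proof.
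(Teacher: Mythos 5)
Your proposal is correct and takes essentially the same route as the paper: the paper likewise establishes $F_w\in\F_1$ and $\widetilde{F}_w\in\F_2$ by checking that $\langle F_w,F_w\rangle_1$ and $\langle\widetilde{F}_w,\widetilde{F}_w\rangle_2$ are convergent series for every $w$ (your estimate on $\sum_l |e_l(w)|^2$ is exactly this computation, with the harmless slip that $(2n)^{2k}$ actually sits in the denominator and only helps), and it derives the reproducing property from $\langle e_l, F_w\rangle_1 = e_l(w)$ extended linearly, with limits passing through because $\|F_w\|_1<\infty$. The only cosmetic differences are your Riesz-representation framing, which packages existence and uniqueness up front and, as you note, avoids any circularity in the earlier bounded-point-evaluation claim by making the estimate explicit, and the paper's closed-form ${}_0F_1$ expressions for the kernels, which are supplementary rather than needed for the proof.
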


\begin{proof}
Elementary methods show that the series converge for all $z\in\C$ and thus guarantee the entirety of $F_w$ and $\widetilde{F}_w$ (as functions of $z$). From the definitions of $e_l$ and $\widetilde{e}_l$, it is straightforward to see that $F_w$ and $\widetilde{F}_w$ can be represented by the hypergeometric function ${}_0F_1$ \cite[Eq.~16.2.1]{nist}
\begin{align}
F_w(z) &= {}_0F_1\bigg(;\frac{1}{2n};\frac{(z\overline{w})^{2n}}{(2n)^2}\bigg) + \frac{\Gamma\big(\frac{1}{2n}\big)}{\Gamma\big(2-\frac{1}{2n}\big)} \frac{(z\overline{w})^{2n-1}}{(2n)^{2-\frac{1}{n}}} {}_0F_1\bigg(;2-\frac{1}{2n}; \frac{(z\overline{w})^{2n}}{(2n)^2}\bigg), \\
\widetilde{F}_w(z) &= \frac{\Gamma\big(\frac{1}{2n}\big)}{\Gamma\big(1-\frac{1}{2n}\big)}\frac{(z\overline{w})^{n-1}}{(2n)^{1-\frac{1}{n}}}{}_0F_1\bigg(;1-\frac{1}{2n};\frac{(z\overline{w})^{2n}}{(2n)^2}\bigg) + (z\overline{w})^n {}_0F_1\bigg(;1+\frac{1}{2n}; \frac{(z\overline{w})^{2n}}{(2n)^2}\bigg).
\end{align}

We now show that $F_w\in\F_1$ and $\widetilde{F}_w\in\F_2$ for fixed $w\in\C$. To this end we compute $\langle F_w, F_w\rangle_1$ and $\langle\widetilde{F}_w, \widetilde{F}_w\rangle_2$. Noting that $\langle e_l, e_l\rangle_1 = 1 = \langle \widetilde{e}_l, \widetilde{e}_l\rangle$ for all $l$,
\begin{align}
\langle F_w, F_w\rangle_1 &= \sum_{l=0}^{\infty} \frac{\Gamma\big(\frac{1}{2n}\big) |w|^{4nl}}{(2n)^{2l}\Gamma\big(l + \frac{1}{2n}\big) l!} + \sum_{l=0}^{\infty} \frac{\Gamma\big(\frac{1}{2n}\big)|w|^{4nl+4n-2}}{(2n)^{2l+2-\frac{1}{n}} \Gamma\big(l+2-\frac{1}{2n}\big)l!} \\
\langle \widetilde{F}_w, \widetilde{F}_w\rangle_2 &= \sum_{l=0}^{\infty} \frac{\Gamma\big(\frac{1}{2n}\big) |w|^{4nl+2n-2}}{(2n)^{2l+1-\frac{1}{n}}\Gamma\big(l + 1 - \frac{1}{2n}\big) l!} + \sum_{l=0}^{\infty} \frac{\Gamma\big(\frac{1}{2n}\big)|w|^{4nl+2n}}{(2n)^{2l+1} \Gamma\big(l+1+\frac{1}{2n}\big)l!}.
\end{align}

\noindent A straightforward analysis shows that each series converges for all $w\in\C$ and so both inner products are finite. Thus $F_w\in\F_1$ and $\widetilde{F}_w\in\F_2$ for all $w\in\C$.

That $F_w$ and $\widetilde{F}_w$ are reproducing kernels follows directly from the fact that $\langle e_l, F_w\rangle_1 = e_l(w)$ and $\langle \widetilde{e}_l, \widetilde{F}_w\rangle_2 = \widetilde{e}_l(w)$, extended linearly to the whole space, noting that limits pass through by virtue of $\|F_w\|_1 < \infty$ and $\|\widetilde{F}_w\|_2 < \infty$.
\end{proof}

\begin{remark}
When $n=1$, $F_w$ and $\widetilde{F}_w$ simplify to $e^{z\bar{w}}$, matching the reproducing kernel in the Segal-Bargmann space as expected.
\end{remark}

In the next section, we will explore the relationship between the real line realization of the coupled SUSY and the holomorphic realization and develop a generalization of the Segal-Bargmann transform.

\section{The Coupled SUSY Segal-Bargmann Transforms} \label{sec:4}

As the ordered quadruplet $\{\a, \b, -1, 2n-1\}$ satisfies the same coupled SUSY relations as the ordered quadruplet $\{a, b, -1, 2n-1\}$, it is natural to find operators---the coupled SUSY Segal-Bargmann transforms---which make this isomorphism explicit. We would like the Segal-Bargmann transforms to map $a$ to $\a$, $b$ to $\b$, $a^*$ to $\a^*$, and $b^*$ to $\b^*$. Let the coupled SUSY Segal-Bargmann transforms $\B_1:\H_1\to\F_1$ and $\B_2:\H_2\to\F_2$ to be the bounded operators defined via the following commutative diagrams.
\begin{center}
 \begin{figure}[!ht]
  \begin{tikzpicture}
   \node (a) at (0,0) {$\H_1$};
   \node (b) at (0,2) {$\F_1$};
   \node (c) at (5,0) {$\H_2$};
   \node (d) at (5,2) {$\F_2$};
   
   \node (e) at (8,0) {$\H_1$};
   \node (f) at (8,2) {$\F_1$};
   \node (g) at (13,0) {$\H_2$};
   \node (h) at (13,2) {$\F_2$};
   
   \draw[->] (a) to node [left]  {$\B_1$}                                                   (b);
   \draw[->] (a) to node [above] {$a$} (c);
   \draw[->] (b) to node [above] {$\a$}       (d);
   \draw[->] (c) to node [left]  {$\B_2$}                                                   (d);
   
   \draw[->] (e) to node [left]  {$\B_1$}                                                   (f);
   \draw[<-] (e) to node [above] {$a^*$} (g);
   \draw[<-] (f) to node [above] {$\a^*$}       (h);
   \draw[->] (g) to node [left]  {$\B_2$}                                                   (h);
  \end{tikzpicture}
\end{figure}
\begin{figure}[!ht]
  \begin{tikzpicture}
   \node (a) at (0,0) {$\H_1$};
   \node (b) at (0,2) {$\F_1$};
   \node (c) at (5,0) {$\H_2$};
   \node (d) at (5,2) {$\F_2$};
   
   \node (e) at (8,0) {$\H_1$};
   \node (f) at (8,2) {$\F_1$};
   \node (g) at (13,0) {$\H_2$};
   \node (h) at (13,2) {$\F_2$};
   
   \draw[->] (a) to node [left]  {$\B_1$}                                                      (b);
   \draw[<-] (a) to node [above] {$b$}  (c);
   \draw[<-] (b) to node [above] {$\b$}        (d);
   \draw[->] (c) to node [left]  {$\B_2$}                                                      (d);
   
   \draw[->] (e) to node [left]  {$\B_1$}                                                   (f);
   \draw[<-] (g) to node [above] {$b^*$} (e);
   \draw[<-] (h) to node [above] {$\b^*$}       (f);
   \draw[->] (g) to node [left]  {$\B_2$}                                                   (h);
  \end{tikzpicture}
 \caption{Commutative diagrams which define the coupled SUSY Segal-Bargmann transforms $\B_1$ and $\B_2$}
 \end{figure}
\end{center}

For $f_1\in \H_1$, $\B_1 f_1$ is a holomorphic function and is therefore well-defined pointwise, similarly for $f_2\in \H_2$ and $\B_2 f_2$. As the evaluation map $z \mapsto \B_1 f_1(z)$ is bounded by way of \Cref{thm:1}, $|\B_1 f_1(z)| \le C \||\B_1 f_1\|_1$. Furthermore since $\B_1$ is assumed to be bounded, $|\B_1 f_1(z)| \le D \|f_1\|$ and so by the Riesz representation theorem, there exists a function $A_1(z,\cdot)\in\H_1$ for each fixed $z$ such that
\begin{equation}
 \B_1 f_1(z) = \int_{\R} A_1(z,x)f_1(x)\,\mathrm{d}x.
\end{equation}

\noindent Similar is true for $\B_2 f_2$. The commutative diagrams in conjunction with the above representation for $\B_1$ and $\B_2$ can be realized as
\begin{align}
 \frac{1}{z^{n-1}}\d{\mathrm{d}}{z} \int_{\R} A_1(z,x) f_1(x)\,\mathrm{d}x &= \int_{\R} A_2(z,x) \frac{1}{\sqrt{2}}\bigg(\frac{1}{x^{n-1}}\d{\mathrm{d}}{x} + x^n\bigg) f_1(x)\,\mathrm{d}x \\
 z^n \int_{\R} A_1(z,x) f_1(x)\,\mathrm{d}x &= \int_{\R} A_2(z,x) \frac{1}{\sqrt{2}}\bigg(-\frac{1}{x^{n-1}}\d{\mathrm{d}}{x} + x^n\bigg) f_1(x)\,\mathrm{d}x \\
 z^n \int_{\R} A_2(z,x) f_2(x)\,\mathrm{d}x &= \int_{\R} A_1(z,x) \frac{1}{\sqrt{2}}\bigg(-\d{\mathrm{d}}{x}\frac{1}{x^{n-1}} + x^n\bigg) f_2(x)\,\mathrm{d}x \\
 \d{\mathrm{d}}{z}\frac{1}{z^{n-1}} \int_{\R} A_2(z,x) f_2(x)\,\mathrm{d}x &= \int_{\R} A_1(z,x) \frac{1}{\sqrt{2}}\bigg(\d{\mathrm{d}}{x}\frac{1}{x^{n-1}} + x^n\bigg) f_2(x)\,\mathrm{d}x.
\end{align}

Since we wish for these identities to hold for all $f_1$ and $f_2$, applying integrations by parts---assuming sufficiently nice (exponentially decaying) behavior for $A_1$ and $A_2$---that the following relations hold:
\begin{align}
 \frac{1}{z^{n-1}}\d{\partial}{z} A_1(z,x) &= \frac{1}{\sqrt{2}} \bigg(-\d{\partial}{x}\frac{1}{x^{n-1}} + x^n\bigg) A_2(z,x) \\
 \d{\partial}{z}\frac{1}{z^{n-1}} A_2(z,x) &= \frac{1}{\sqrt{2}}\bigg(-\frac{1}{x^{n-1}}\d{\partial}{x} + x^n\bigg) A_1(z,x) \\
 z^n A_1(z,x) &= \frac{1}{\sqrt{2}} \bigg(\d{\partial}{x}\frac{1}{x^{n-1}} + x^n\bigg) A_2(z,x) \label{eq:A1} \\
 z^n A_2(z,x) &= \frac{1}{\sqrt{2}} \bigg(\frac{1}{x^{n-1}}\d{\partial}{x} + x^n\bigg) A_1(z,x) \label{eq:A2}
\end{align}

\noindent By taking linear combinations, we obtain
\begin{align}
\frac{1}{\sqrt{2}}\bigg(\frac{1}{z^{n-1}}\d{\partial}{z} + z^n\bigg)A_1(z,x) &= x^n A_2(z,x) \label{eq:A3} \\
\frac{1}{\sqrt{2}}\bigg(\d{\partial}{z} \frac{1}{z^{n-1}} + z^n\bigg)A_2(z,x) &= x^n A_1(z,x) \label{eq:A4}
\end{align}

\noindent We must first note a pair of identities that will aid in the solution of the above partial differential equations.

\begin{proposition}
The following basic ``integrating factor''-like identities hold.

\begin{align}
 \bigg(\frac{1}{t^{n-1}}\d{\mathrm{d}}{t} + t^n\bigg)e^{-\frac{t^{2n}}{2n}}f(t) &= e^{-\frac{t^{2n}}{2n}}\frac{1}{t^{n-1}} \d{\mathrm{d}}{t}f(t), \\
 \bigg(\d{\mathrm{d}}{t}\frac{1}{t^{n-1}}+t^n\bigg)e^{-\frac{t^{2n}}{2n}}f(t) &= e^{-\frac{t^{2n}}{2n}} \d{\mathrm{d}}{t} \frac{1}{t^{n-1}} f(t).
\end{align}
\end{proposition}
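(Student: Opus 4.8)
The plan is to verify both identities by a single application of the product rule; the entire content is that the weight factor $e^{-\frac{t^{2n}}{2n}}$ conjugates the bare operators $\frac{1}{t^{n-1}}\d{\mathrm{d}}{t}$ and $\d{\mathrm{d}}{t}\frac{1}{t^{n-1}}$ into the full coupled SUSY operators (after dropping the $\frac{1}{\sqrt{2}}$). The one computational fact driving everything is
\[
\d{\mathrm{d}}{t} e^{-\frac{t^{2n}}{2n}} = -t^{2n-1} e^{-\frac{t^{2n}}{2n}},
\]
so that differentiating a product $e^{-\frac{t^{2n}}{2n}} g(t)$ yields $e^{-\frac{t^{2n}}{2n}}\big(g'(t) - t^{2n-1} g(t)\big)$. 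The whole proof then comes down to watching two $\pm t^n f$ cross terms cancel.

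For the first identity I would expand $\frac{1}{t^{n-1}}\d{\mathrm{d}}{t}\big[e^{-\frac{t^{2n}}{2n}} f\big]$ with the observation above, giving $e^{-\frac{t^{2n}}{2n}}\big(\frac{1}{t^{n-1}} f' - t^{2n-1}\cdot\frac{1}{t^{n-1}} f\big)$, and since $t^{2n-1-(n-1)} = t^n$ this equals $e^{-\frac{t^{2n}}{2n}}\big(\frac{1}{t^{n-1}} f' - t^n f\big)$. Adding the remaining summand $t^n e^{-\frac{t^{2n}}{2n}} f$ cancels the $-t^n f$ term exactly, leaving $e^{-\frac{t^{2n}}{2n}}\frac{1}{t^{n-1}} f'$, which is precisely the claimed right-hand side.

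For the second identity the only structural change is that the factor $\frac{1}{t^{n-1}}$ now sits to the left of the derivative, so I would set $g(t) = \frac{1}{t^{n-1}} f(t)$ and apply the same product-rule identity to $\d{\mathrm{d}}{t}\big[e^{-\frac{t^{2n}}{2n}} g\big]$. This produces $e^{-\frac{t^{2n}}{2n}}\big(g' - t^{2n-1} g\big) = e^{-\frac{t^{2n}}{2n}}\big(\d{\mathrm{d}}{t}\frac{1}{t^{n-1}} f - t^n f\big)$, where again $t^{2n-1} g = t^n f$; the $-t^n f$ once more cancels against the $t^n e^{-\frac{t^{2n}}{2n}} f$ coming from the second summand of the operator, yielding $e^{-\frac{t^{2n}}{2n}}\d{\mathrm{d}}{t}\frac{1}{t^{n-1}} f$.

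There is no genuine obstacle here: these are formal operator identities valid for any differentiable $f$ (with the standing convention $t \neq 0$ so that the negative powers of $t$ are defined), and the proof is entirely the cancellation of the two cross terms. The only point deserving care is fixing the convention for the composite operator $\d{\mathrm{d}}{t}\frac{1}{t^{n-1}}$, namely that one multiplies by $t^{-(n-1)}$ first and differentiates second, consistent with its usage earlier in the paper; with that reading the computation goes through verbatim.
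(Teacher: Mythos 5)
Your computation is correct and is exactly the routine product-rule verification the paper implicitly relies on (it states the proposition without proof, treating it as a direct calculation): the single fact $\frac{\mathrm{d}}{\mathrm{d}t}e^{-\frac{t^{2n}}{2n}} = -t^{2n-1}e^{-\frac{t^{2n}}{2n}}$ plus cancellation of the $\pm t^n f$ cross terms, with the correct operator-ordering convention for $\frac{\mathrm{d}}{\mathrm{d}t}\frac{1}{t^{n-1}}$. Nothing is missing.
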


\noindent Noting these identities and the symmetry in $x$ and $z$ in \eqref{eq:A1}--\eqref{eq:A4}, define $B_1(z,x) = e^{\frac{z^{2n}}{2n}} A_1(z,x) e^{\frac{x^{2n}}{2n}}$ and $B_2(z,x) = e^{\frac{z^{2n}}{2n}}A_2(z,x)e^{\frac{x^{2n}}{2n}}$. $B_1$ and $B_2$ then solve simpler coupled partial differential equations:
\begin{align}
\frac{1}{x^{n-1}} \d{\partial}{x} B_1(z,x) &= \sqrt{2} z^n B_2(z,x) \\
\frac{1}{z^{n-1}} \d{\partial}{z} B_1(z,x) &= \sqrt{2} x^n B_2(z,x) \\
\d{\partial}{x} \frac{1}{x^{n-1}} B_2(z,x) &= \sqrt{2} z^n B_1(z,x) \\
\d{\partial}{z} \frac{1}{z^{n-1}} B_2(z,x) &= \sqrt{2} x^n B_1(z,x).
\end{align}

These equations are again symmetric in $x$ and $z$ and resemble the intertwining relations for $\rho_1$ and $\rho_2$ with the primary differences being the appearance of the multiplicative factor of $\sqrt{2}$ and $x$ rather than $\bar{z}$. Taking cues from the analysis for $\rho_1$ and $\rho_2$, $B_1$ and $B_2$ have the following forms
\begin{align}
B_1(z,x) &= \alpha \sum_{l=0}^{\infty} \frac{2^l(zx)^{2nl}}{(2n)^{2l} \Gamma\big(l+\frac{1}{2n}\big)l!} + \beta \sum_{l=0}^{\infty} \frac{2^{l+\frac{1}{2}} (zx)^{2nl+2n-1}}{(2n)^{2l+1}\Gamma(l+2-\frac{1}{2n}\big)l!}, \\
B_2(z,x) &= \beta \sum_{l=0}^{\infty} \frac{2^l (zx)^{2nl+n-1}}{(2n)^{2l}\Gamma\big(l+1-\frac{1}{2n}\big)l!} + \alpha \sum_{l=0}^{\infty} \frac{2^{l+\frac{1}{2}}(zx)^{2nl+n}}{(2n)^{2l+1}\Gamma(l+1+\frac{1}{2n}\big)l!},
\end{align}

\noindent where $\alpha$ and $\beta$ are to be determined. To uniquely identify $\alpha$ and $\beta$ (which will also give unitarity as we will see shortly), we require that
\begin{align}
e_0(z) &= \int_{\R} A_1(z,x) \psi_0(x)\,\mathrm{d}x \label{eq:unitary1} \\
\widetilde{e}_0(z) &= \int_{\R} A_2(z,x) \widetilde{\psi}_0(x)\,\mathrm{d}x \label{eq:unitary2}
\end{align}

\noindent Under these assumptions, $\alpha$ and $\beta$ are given by
\begin{align}
\alpha &= n^{\frac{1}{2}-\frac{1}{4n}} \sqrt{\Gamma\bigg(\frac{1}{2n}\bigg)} \\
\beta &= 2^{-\frac{1}{2} + \frac{1}{2n}} n^{-\frac{1}{2} + \frac{3}{4n}} \sqrt{\Gamma\bigg(\frac{1}{2n}\bigg)}.
\end{align}

\noindent Simply, to determine $\alpha$, plug in $z=0$ on both sides and evaluate the integral; to determine $\beta$, divide both sides by $z^{n-1}$, plug in $z=0$, and evaluate the integral.

As before with $F_w$ and $\widetilde{F}_w$, representations in terms of the hypergeometric function ${}_0F_1$ exist for $B_1$ and $B_2$ and thus $A_1$ and $A_2$:
\begin{align}
B_1(z,x) &= \frac{\alpha}{\Gamma\big(\frac{1}{2n}\big)} {}_0F_1\bigg(;\frac{1}{2n};\frac{(zx)^{2n}}{2n^2}\bigg) + \frac{\beta}{\sqrt{2}n\Gamma\big(2-\frac{1}{2n}\big)} (zx)^{2n-1} {}_0F_1\bigg(;2-\frac{1}{2n}; \frac{(zx)^{2n}}{2n^2}\bigg), \\
B_2(z,x) &= \frac{\beta}{\Gamma\big(1-\frac{1}{2n}\big)} (zx)^{n-1} {}_0F_1\bigg(;1-\frac{1}{2n}; \frac{(zx)^{2n}}{2n^2}\bigg) + \frac{\sqrt{2}\alpha}{\Gamma\big(\frac{1}{2n}\big)} (zx)^n {}_0F_1\bigg(;1+\frac{1}{2n};\frac{(zx)^{2n}}{2n^2}\bigg).
\end{align}

\begin{remark}
When $n=1$, $\alpha = \pi^{\frac{1}{4}} = \beta$ and $\B_1$ and $\B_2$ simplify to a single integral operator given by
\begin{equation}
\B f(z) = \frac{1}{\pi^{\frac{1}{4}}} \int_{\R} e^{-\frac{z^2}{2}} e^{\sqrt{2}zx} e^{-\frac{x^2}{2}} f(x)\,\mathrm{d}x
\end{equation}

\noindent which can be recognized as the usual Segal-Bargmann transform.
\end{remark}

\begin{remark}
Of particular note is the appearance of the generalized Gaussians $e^{-\frac{x^{2n}}{2n}}$ and $e^{-\frac{z^{2n}}{2n}}$ in the coupled SUSY Segal-Bargmann transform integral kernels. In the traditional Segal-Bargmann setting, the appearance of the Gaussians $e^{-\frac{z^2}{2}}$ and $e^{-\frac{x^2}{2}}$ and the exponential $e^{\sqrt{2}zx}$ in the traditional Segal-Bargmann transform integral kernel seem like a happy accident; however in this general setting, it becomes clear that these are intrinsic and emerge from the ``integrating factor''-like simplification of the ladder operators. Furthermore, the connection between the weight $\rho$ and the exponential is clear in this setting: both obey very similar differential equations.
\end{remark}

\begin{theorem}
The Segal-Bargmann transforms $\B_1:\H_1\to\F_1$ and $\B_2:\H_2\to\F_2$ are unitary, i.e. for $f_1\in \H_1$ and $f_2\in\H_2$,
\begin{align}
\langle \B_1 f_1, \B_1 f_1\rangle_1 &= \langle f_1,f_1\rangle, \\
\langle \B_2 f_2, \B_2 f_2\rangle_2 &= \langle f_2, f_2\rangle.
\end{align}
\end{theorem}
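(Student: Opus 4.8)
The plan is to prove unitarity by showing that each transform carries the orthonormal basis of its domain onto the orthonormal basis of its codomain; an everywhere-defined bounded operator that sends one orthonormal basis bijectively onto another is automatically a surjective isometry, hence unitary, and the asserted inner-product identities then follow from Parseval's theorem. Concretely, I would prove that $\B_1\psi_l = e_l$ for every $l$ and, symmetrically, $\B_2\widetilde\psi_l = \widetilde e_l$ for every $l$. Since $(\psi_l)$ and $(\widetilde\psi_l)$ are orthonormal bases of $\H_1$ and $\H_2$ while $(e_l)$ and $(\widetilde e_l)$ are orthonormal bases of $\F_1$ and $\F_2$, this is precisely the statement that $\B_1$ and $\B_2$ are unitary.

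The base cases are built into the construction: the constants $\alpha$ and $\beta$ in the kernels $A_1,A_2$ were fixed exactly so that $\B_1\psi_0 = e_0$ and $\B_2\widetilde\psi_0 = \widetilde e_0$ (equations \eqref{eq:unitary1}--\eqref{eq:unitary2}). To climb the ladder I would use the intertwining relations encoded in the defining commutative diagrams, which for the integral operators are exactly the kernel identities \eqref{eq:A1}--\eqref{eq:A4}: namely $\B_1 a^* = \a^*\B_2$ and $\B_2 b^* = \b^*\B_1$ (together with their companions for the lowering operators), where $\a^*$ and $\b^*$ act as multiplication by $z^n$. Composing these gives $\B_1(a^*b^*) = \a^*\b^*\B_1 = z^{2n}\B_1$ on $\F_1$, so by induction $\B_1(a^*b^*)^l\psi_0 = z^{2nl}$ and $\B_1(a^*b^*)^l a^*\widetilde\psi_0 = z^{2nl}\,\a^*\B_2\widetilde\psi_0 = z^{2nl+n}\,\widetilde e_0$, which is a scalar multiple of $z^{2nl+2n-1}$ since $\widetilde e_0$ is a multiple of $z^{n-1}$. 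These are precisely scalar multiples of $e_{2l}$ and $e_{2l+1}$, and the same computation with $b^*a^*$ handles $\B_2$, yielding scalar multiples of $\widetilde e_{2l}$ and $\widetilde e_{2l+1}$.

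At this stage $\B_1\psi_l = \mu_l e_l$ and $\B_2\widetilde\psi_l = \widetilde\mu_l\widetilde e_l$ for positive scalars $\mu_l,\widetilde\mu_l$, and the crux is to show all of these equal $1$. Since $\psi_{2l}$ is the normalization of $(a^*b^*)^l\psi_0$ while $e_{2l}$ is the normalization of $z^{2nl}$, the identity $\mu_{2l}=1$ is equivalent to $\|(a^*b^*)^l\psi_0\|^2$ (the $\H_1$-norm) coinciding with the reciprocal of the squared normalization constant of $e_{2l}$. I would compute the former using the $\mathfrak{su}(1,1)$ structure: because $(a^*b^*)^* = ba$ and $a^*a\,\psi_0 = 0$, expanding the word $(ba)^l(a^*b^*)^l$ as a polynomial in $a^*a$ by repeated use of the coupled SUSY relations \eqref{eq:coupledsusy_real1}--\eqref{eq:coupledsusy2_real2} gives $\|(a^*b^*)^l\psi_0\|^2 = \prod_{k=1}^{l}\big(2n(k-1)+1\big)(2nk) = (2n)^{2l}\frac{\Gamma(l+\frac{1}{2n})}{\Gamma(\frac{1}{2n})}\,l!$, which is exactly the reciprocal of the constant appearing in $e_{2l}$. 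An analogous evaluation of $\|(a^*b^*)^l a^*\widetilde\psi_0\|^2$, using $aa^* = b^*b + 2n-1$ and $b\widetilde\psi_0 = 0$, matches the constant in $e_{2l+1}$, and the two computations for $\B_2$ are identical after interchanging the roles of the two sectors. Hence $\mu_l \equiv 1$ and $\widetilde\mu_l \equiv 1$.

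I expect the main obstacle to be the bookkeeping in this last step: pushing the coupled SUSY and $\mathfrak{su}(1,1)$ commutation relations through the mixed words $(ba)^l(a^*b^*)^l$ and $a(ba)^l(a^*b^*)^l a^*$, and then reconciling the resulting products of linear factors with the Gamma-function normalization constants defining $e_l$ and $\widetilde e_l$. The conceptual ingredients---the base case, the intertwining relations, and completeness of the eigenbases---are immediate; it is verifying these Gamma-function identities (equivalently, that $\mu_l\equiv 1$) that carries the real content. Once that is in hand, the standard observation that $\B_1,\B_2$ are bounded and map an orthonormal basis bijectively onto an orthonormal basis upgrades the isometry on finite linear combinations to a genuine unitary on all of $\H_1$ and $\H_2$, completing the proof.
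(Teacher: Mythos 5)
Your proposal is correct, and its skeleton is the paper's: establish the base cases $\B_1\psi_0=e_0$ and $\B_2\widetilde\psi_0=\widetilde e_0$ (forced by the choice of $\alpha$ and $\beta$ in \eqref{eq:unitary1}--\eqref{eq:unitary2}), climb the ladder with the intertwining relations to get $\B_1\psi_l=e_l$ and $\B_2\widetilde\psi_l=\widetilde e_l$, and conclude that a bounded operator carrying one orthonormal basis onto another is unitary. Where you genuinely diverge is the crux step of showing the proportionality constants are exactly $1$. The paper argues abstractly: since $a^*b^*\psi_l\propto\psi_{l+2}$ with a constant determined entirely by the eigenvalue, and since the real-line and holomorphic realizations are both \emph{unbroken} coupled SUSYs with the same $\gamma=-1$, $\delta=2n-1$ (hence identical spectral data), the constants on the two sides must coincide---no computation needed. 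You instead evaluate the norms outright via the word expansion $(ba)^l(a^*b^*)^l=\prod_{k=1}^{l}\big(a^*a+2n(k-1)+1\big)\big(a^*a+2nk\big)$ applied to the ground states; your claimed value $\|(a^*b^*)^l\psi_0\|^2=(2n)^{2l}\,l!\,\Gamma\big(l+\tfrac{1}{2n}\big)/\Gamma\big(\tfrac{1}{2n}\big)$ checks against the constant in $e_{2l}$, and in the odd case (where $a^*\widetilde\psi_0$ has $a^*a$-eigenvalue $2n-1$) one gets $\|(a^*b^*)^l a^*\widetilde\psi_0\|^2=(2n)^{2l+1}\,l!\,\Gamma\big(l+2-\tfrac{1}{2n}\big)/\Gamma\big(1-\tfrac{1}{2n}\big)$, which matches $e_{2l+1}$ \emph{after} accounting for the factor picked up from $\widetilde e_0$ being a multiple of $z^{n-1}$ rather than $1$---a bookkeeping subtlety you correctly flag, and which does work out. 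Amusingly, the product identity you rely on appears verbatim in the paper's source as a proposition the author commented out, so your route was evidently the original one. The trade-off: the paper's spectral-matching argument is shorter and applies to any two realizations of the same unbroken coupled SUSY without touching Gamma functions, while your computation is self-contained and makes explicit the step the published proof compresses into the single sentence that the constant ``is determined by the eigenvalue exactly.''
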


\begin{proof}
By construction of $A_1$ and $A_2$,
\begin{align}
\B_1 \psi_0 &= e_0, \\
\B_2 \widetilde{\psi}_0 &= \widetilde{e}_0.
\end{align}

\noindent Furthermore, a direct computation shows that $\B_1 \psi_1 = e_1$. This can be shown by noting that $a\psi_1 = \sqrt{2n-1} \widetilde{\psi}_0$ and using the coupled SUSY Segal-Bargmann transform relations to get that $\a \B_1 \psi_1 = \B_2 a\psi_1 = \sqrt{2n-1}\B_2 \widetilde{\psi}_0$. Solving the differential equation $\frac{1}{z^{n-1}}\d{\mathrm{d}}{z}\B_1\psi_1 = \sqrt{2n-1}\widetilde{e}_0$ gives that $\B_1 \psi_1 = e_1$. Showing $\B_2 \widetilde{\psi}_1 = \widetilde{e}_1$ proceeds similarly.

By successively applying raising operators and using these four base cases, inductively it can be shown more generally that
\begin{align*}
\B_1 \psi_l &= e_l, \\
\B_2 \widetilde{\psi}_l &= \widetilde{e}_l.
\end{align*}

\noindent This follows from the observation that $a^*b^*\psi_l \propto \psi_{l+2}$. The proportionality constant is determined by the eigenvalue exactly, and both representations (real line and holomorphic representations) of the coupled SUSY in question have the same spectra as they are both unbroken and share the same $\gamma$ and $\delta$.

Thus $\B_1$ and $\B_2$ are surjective and norm-preserving as all $\psi_l$, $\widetilde{\psi}_l$, $e_l$, and $\widetilde{e}_l$ are normalized and form orthonormal bases for their respective spaces, and therefore $\B_1$ and $\B_2$ are unitary.
\end{proof}

Much like the typical Segal-Bargmann transform, we also have explicit expressions for the inverse coupled SUSY Segal-Bargmann transforms.

\begin{corollary}
The coupled SUSY Segal-Bargmann transforms $\B_1$ and $\B_2$ have inverses $\B_1^{-1}:\F_1\to\H_1$ and $\B_2^{-1}:\F_2\to\H_2$, respectively, given by
\begin{align}
\B_1^{-1} f_1(x) &= \int_{\C} A_1(\bar{z},x) f_1(z)\rho_1(z,\bar{z})\,\mathrm{d}A(z) \\
\B_2^{-1} f_2(x) &= \int_{\C} A_2(\bar{z},x) f_2(z)\rho_2(z,\bar{z})\,\mathrm{d}A(z)
\end{align}
\end{corollary}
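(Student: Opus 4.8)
The plan is to lean on the unitarity of $\B_1$ and $\B_2$ established in the preceding theorem, which gives $\B_1^{-1} = \B_1^*$ and $\B_2^{-1} = \B_2^*$, and then to identify these adjoints explicitly as integral operators against $\rho_1$ and $\rho_2$. I will carry out the argument for $\B_1$; the argument for $\B_2$ is verbatim with $\rho_1, A_1, e_l, \psi_l$ replaced by $\rho_2, A_2, \widetilde{e}_l, \widetilde{\psi}_l$.

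First I would unwind the defining adjoint relation $\langle \B_1 f_1, g_1\rangle_1 = \langle f_1, \B_1^* g_1\rangle$ for $f_1 \in \H_1$ and $g_1 \in \F_1$. Writing $\B_1 f_1(z) = \int_{\R} A_1(z,x) f_1(x)\,\mathrm{d}x$ and expanding the $\F_1$-inner product against the weight $\rho_1$ produces a double integral over $\C$ and $\R$; interchanging the order of integration and recalling that $\rho_1$ is real gives
\[
\B_1^* g_1(x) = \int_{\C} \overline{A_1(z,x)}\, g_1(z)\, \rho_1(z,\bar z)\,\mathrm{d}A(z).
\]
Next I would establish the conjugation symmetry $\overline{A_1(z,x)} = A_1(\bar z, x)$ for $x\in\R$. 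This is immediate from the explicit series: for fixed real $x$, $B_1(z,x)$ is a power series in $z$ with real coefficients, and the generalized Gaussian factor $e^{-\frac{z^{2n}}{2n}}$ likewise has real Taylor coefficients, so conjugating $A_1(z,x) = e^{-\frac{z^{2n}}{2n}} B_1(z,x) e^{-\frac{x^{2n}}{2n}}$ simply replaces $z$ by $\bar z$. Substituting this into the displayed formula turns the kernel into $A_1(\bar z, x)$, and unitarity converts $\B_1^*$ into $\B_1^{-1}$, yielding the claim.

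The main obstacle is justifying the interchange of integration (Fubini's theorem). Here I would invoke the decay already assembled in the paper: the factor $e^{-\frac{z^{2n}}{2n}}$ inside $A_1$ together with the exponential decay of $\rho_1$ coming from the $K_{\nu}$ asymptotics dominates the polynomially growing ${}_0F_1$ parts, so the integrand is absolutely integrable on $\C\times\R$, at least on the dense class of finite linear combinations of the $e_l$ and $\psi_l$; the identity then extends to all of $\F_1$ by continuity of both sides (point evaluation is bounded by \Cref{thm:1}). As a cross-check that also sidesteps the adjoint bookkeeping, I would verify the formula directly on the orthonormal basis. From $\B_1\psi_l = e_l$ and the reality of the $\psi_l$ one obtains the Mercer-type expansion $A_1(z,x) = \sum_l e_l(z)\psi_l(x)$ in $\H_1$ for each fixed $z$, and since each $e_l$ is a monomial with real coefficient we have $e_l(\bar z) = \overline{e_l(z)}$. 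Hence
\[
\int_{\C} A_1(\bar z, x)\, e_m(z)\, \rho_1(z,\bar z)\,\mathrm{d}A(z) = \sum_l \psi_l(x)\,\langle e_m, e_l\rangle_1 = \psi_m(x) = \B_1^{-1} e_m(x),
\]
so the proposed operator agrees with $\B_1^{-1}$ on an orthonormal basis and therefore, by boundedness and linearity, on all of $\F_1$.
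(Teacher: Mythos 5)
Your proposal is correct and follows essentially the same route as the paper, whose proof is exactly the two alternatives you execute: a Fubini--Tonelli interchange justified by the exponential decay of $A_1$, $A_2$, $\rho_1$, $\rho_2$, or direct verification on the basis elements $e_l$, $\widetilde{e}_l$ extended by linearity. Your explicit justification of the conjugation symmetry $\overline{A_1(z,x)} = A_1(\bar z, x)$ via the reality of the series coefficients is a detail the paper leaves implicit, and it is a welcome addition.
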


\begin{proof}
The proof follows from a simple application of Fubini-Tonelli, noting the exponential decay of $A_1$ and $A_2$ as well as $\rho_1$ and $\rho_2$ or by acting directly on the basis elements $e_l$ and $\widetilde{e}_l$ and extending linearly.
\end{proof}

\begin{remark}
A quick inspection shows that there is a canonical inclusion of the Segal-Bargmann spaces $\F_1$ and $\F_2$ into $\F$ by sending $z^k$ in $\F_1$ and $\F_2$ to $z^k$ in $\F$. A function in $L^2(\R,\mathrm{d}x)$ will in general have distinct holomorphic representations through different coupled SUSY Segal-Bargmann transforms, not in contradiction with the identity theorem as the representations are related by the Segal-Bargmann transforms and are not directly equal to each other.
\end{remark}

\section{The Connection to Short-Time Transforms and Coherent States} \label{sec:5}

The usual Segal-Bargmann transform is closely related to the short-time Fourier transform and coherent states. The short-time Fourier transform is a time-frequency representation wherein the frequency content of segment of a function is analyzed \cite{grochenig}. This is achieved by taking the Fourier transform of the function with a moving window. Rigorously,

\begin{definition}
Given (typically non-negative and normalized) $g \in L^2(\R,\mathrm{d}x)$, the short-time Fourier transform with window function $g$ is the map $\mathcal{V}_g:L^2(\R,\mathrm{d}x)\to L^2(\R^2,\mathrm{d}x\,\mathrm{d}y)$ given by
\begin{equation}
\mathcal{V}_gf(\omega, t) = \frac{1}{\sqrt{2\pi}} \int_{\R} e^{-i\omega \tau} g(\tau-t) f(\tau)\,\mathrm{d}\tau.
\end{equation}
\end{definition}

This is a well-defined integral operator as $g_tf \in L^1(\R,\mathrm{d}x)$ by Cauchy-Schwartz. That it maps to $L^2(\R^2,\mathrm{d}x\,\mathrm{d}y)$ follows from the unitarity of the Fourier transform on $L^2(\R,\mathrm{d}x)$.

Coherent states have multiple different definitions that are not in general equivalent \cite{brif}. At least three competing definitions exist: a coherent state is an eigenfunction of the lowering operator, coherent states are generated from dilation operators, and coherent states are minimum uncertainty states (for a given pair of non-commuting operators). In some cases, these definitions can coincide, for instance in the oscillator algebra case. For our purposes, we will be concerned with eigenfunctions of a lowering operator.

The connection between the short-time Fourier transform and the Segal-Bargmann transform can be made by letting $z = \frac{t}{\sqrt{2}} - i\sqrt{2}\omega$ and $g(t) = \frac{1}{\pi^{\frac{1}{4}}} e^{-\frac{t^2}{2}}$ to get
\begin{align}
\mathcal{V}_g f(\omega, t) = \frac{1}{\sqrt{2\pi}} e^{-\frac{t^2}{4} - \omega^2} e^{2i\omega t} \B f(z).
\end{align}

\noindent The connection between coherent states and the Segal-Bargmann transform is immediate as the Segal-Bargmann kernel is an eigenfunction of the lowering operator.

In \cite{williams1}, the authors defined a short-time analogue for the $\Phi_n$ transforms built upon the concept of moving windows. The $\Phi_n$ transform of a sufficiently nice function as an integral transform is given by
\begin{equation}
\Phi_n f(y) = \int_{\R} \varphi_n(xy) f(x)\,\mathrm{d}x,
\end{equation}

\noindent where $\varphi_n$ is a solution to the differential equation
\begin{equation}
-\d{\textrm{d}}{x}\frac{1}{x^{2n-2}}\d{\textrm{d}}{x}\varphi_n(xy) = y^{2n}\varphi_n(xy)
\end{equation}

\noindent with $\varphi_n(0) = \frac{n}{(2n)^{\frac{1}{2n}}\Gamma\big(\frac{1}{2n}\big)}$ and $\varphi_n'(0) = -i\frac{n}{(2n)^{2-\frac{1}{2n}}\Gamma\big(2-\frac{1}{2n}\big)}$. The short-time $\Phi_n$ transform of a function $f$ with window $g$ is then defined to be
\begin{equation}
\mathcal{V}_g^{(n)}(y,x) = \int_{\R} \varphi_n(y(x'-x))g(x'-x)f(x')\,\mathrm{d}x'.
\end{equation}

The $\Phi_n$ transforms are closely related to the coupled SUSY involving $a$ and $b$ by the following identity:
\begin{equation}
\Phi_n ab = -ab \Phi_n.
\end{equation}

\noindent which can be confirmed from a simple integration by parts. However despite connection to the coupled SUSYs at hand, a simple inspection shows that the short-time $\Phi_n$ transforms are distinct from the coupled SUSY Segal-Bargmann transforms. This is a break from traditional Segal-Bargmann transform theory and is a direct result of the fact that the $\Phi_n$ transforms do not play nicely with translations in general---this is a (nearly) uniquely identifying property of the Fourier transform. Thus the coupled SUSY Segal-Bargmann transforms may be viewed as an altogether new time-frequency representation.

Despite the break from short-time $\Phi_n$ transform, the coupled SUSY Segal-Bargmann kernels can be viewed as coherent states and thus the coupled SUSY Segal-Bargmann transforms can be viewed as coherent state transforms. Rewriting \eqref{eq:A2} and \eqref{eq:A3} in a matricial form, we have that
\begin{equation}
\begin{pmatrix} 0 & b \\ a & 0 \end{pmatrix} \begin{pmatrix} A_1 \\ A_2\end{pmatrix} = z^n \begin{pmatrix} A_1 \\ A_2 \end{pmatrix},
\end{equation}

\noindent or by squaring,
\begin{equation}
\begin{pmatrix} ba & 0 \\ 0 & ab \end{pmatrix} \begin{pmatrix} A_1 \\ A_2\end{pmatrix} = z^{2n} \begin{pmatrix} A_1 \\ A_2 \end{pmatrix}.
\end{equation}

\noindent Thus the combined state $\begin{pmatrix} A_1 \\ A_2 \end{pmatrix}$ is an eigenfunction of a lowering operator (for a matricial Hamiltonian in this case) and thus a generalized $\mathfrak{su}(1,1)$ coherent state.

This work is closely related to the coherent states and Segal-Bargmann transform developed in \cite{barut}. Therein, the Segal-Bargmann transform also takes the form of the hypergeometric function ${}_0F_1$ and the weight functions and reproducing kernels are similar. The Segal-Bargmann space is also spanned by $z^l$ for $l=0,1,2,\ldots$, whereas the coupled SUSY Segal-Bargmann spaces developed herein are stricter vector subspaces.

Furthermore, in \cite{barut}, limits were taken to obtain the Heisenberg-Weyl algebra, imagining the $\mathfrak{su}(1,1)$ Lie algebra as a $q$-deformation of the Heisenberg-Weyl algebra, and thus the usual Segal-Bargmann transform is not directly generalized, whereas the quantum harmonic oscillator is baked directly into coupled SUSY and so direct generalizations exist in our case.

Moreover, the representations for the $\mathfrak{su}(1,1)$ elements in \cite{barut} were of a relatively complicated form, particularly the lowering operator, but their analogues in $\a$ and $\b$ and their adjoints and products are of a very simple form in $\F_1$ and $\F_2$. The work presented herein is also of a supersymmetric nature which is not present in their analysis.

The work presented herein elucidates some of the traditional Segal-Bargmann transform theory. In the traditional setting, due to properties of exponentials, there is some symmetry between the weight $\rho$ and the integral kernel $A$ which obfuscates some of the mathematics. In this general setting, we see that this symmetry no longer exists. However, when considering the asymptotic forms of the weights $\rho_1$ and $\rho_2$, some of the aforementioned symmetry reveals itself again.

\section*{Acknowledgements}

The author would like to thank Dr. John R. Klauder for the suggestion for this avenue of research and the late Dr. Donald J. Kouri for his mentorship in this project.

\bibliographystyle{plain}
\bibliography{bargmann_transforms_ref}

\end{document}